\newtheorem*{theorem*}{Theorem}
\newtheorem{maintheorem}{Theorem}
\newcommand{\catA}{\mathcal{A}}
\newcommand{\Kernel}{\operatorname{Ker}}
\newcommand{\lra}{\longrightarrow}
\newcommand{\quadand}{\quad\text{and}\quad}
\newcommand{\ra}{\rightarrow}
\newcommand{\val}{\operatorname{val}}
\begin{document}
\title
[The resolution property holds away from codimension three] 
{The resolution property holds away from codimension three}
\date{\today}
\author{Siddharth Mathur}
\email{https://sites.google.com/view/sidmathur/home} 
\address{Mathematisches Institut\\Heinrich-Heine-Universit\"at\\40204 D\"usseldorf, Germany.\\}

\author{Stefan Schr\"oer}
\email{schroeer@math.uni-duesseldorf.de} 
\address{Mathematisches Institut\\Heinrich-Heine-Universit\"at\\40204 D\"usseldorf, Germany.\\}

\begin{abstract}

\noindent The purpose of this paper is to verify a conjecture of Gross under mild hypotheses: all reduced, separated, and excellent schemes have the resolution property away from a closed subset of codimension $\geq 3$. Our technique uses formal-local descent and the existence of affine flat neighborhoods to reduce the problem to constructing certain modules over commutative rings. Once in the category of modules we exhibit enough locally free sheaves directly, thereby establishing the resolution property for a specific class of algebraic spaces. A crucial step is showing it suffices to resolve a single coherent sheaf.

\end{abstract}
\maketitle


\tableofcontents


\section{Introduction}
\label{sec:intro}

A scheme is said to have the \emph{resolution property} if every quasi-coherent sheaf of finite type is the quotient of a locally free sheaf of finite rank. Although quasi-projective and regular schemes have enough locally free sheaves, very little is understood in the broader category of separated schemes or algebraic spaces. Already for surfaces this is a difficult problem (see \cite{NormalSurfaceRes}, \cite{SurfaceRes}, and \cite{SidRes}) and the case of threefolds seems intractable: it is not even known if a proper threefold always admits a single nontrivial locally free sheaf. Although interesting progress has been made in the toric case (see \cite{MR2448277} and \cite{PerlingToric}), a general solution remains elusive even for this simpler question. 

In his thesis, Gross conjectured that separated schemes enjoy the resolution property away from a closed subset of codimension $\geq 3$ (see \cite[Conj. 2.0.3]{Grossthesis}). Among other things, this is an important step for establishing the resolution property for threefolds. In this paper, we verify this conjecture under mild hypotheses:
\begin{maintheorem} \label{maintheorem} Let $X$ denote a scheme which is excellent, reduced and separated. 
\begin{enumerate} \item Every point $x \in X$ admits an open neighborhood $U \subset X$ that contains all points of codimension $\leq 2$ and has the resolution property. 
\item Every coherent sheaf $\sh{F}$ on $X$ is the quotient of a coherent sheaf $\sh{E}$ which is locally free at all points of codimension $\leq 2$. \end{enumerate} \end{maintheorem}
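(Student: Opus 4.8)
The plan is to strip away the geometry in stages until the statement becomes a concrete assertion about finitely generated modules over Noetherian rings, and then to verify that assertion by hand. The first reduction is to resolving a \emph{single} coherent sheaf. I would prove a lemma of the following shape: a quasi-compact separated scheme --- or an algebraic space with reasonable diagonal --- $Y$ has the resolution property as soon as one coherent sheaf $\sh{W}_Y$, attached canonically to $Y$, is the quotient of a finite locally free sheaf; here $\sh{W}_Y$ is engineered so that a locally free surjection onto it simultaneously carries an ample family over the regular locus and detects the non-normal locus, and hence bootstraps to the resolution property for every coherent sheaf. Granting part (1), part (2) follows by d\'evissage: restrict $\sh{F}$ to the open set $U$ furnished by (1), resolve it there by a finite locally free sheaf, extend that sheaf to a coherent sheaf on $X$, and correct the extension of the surjection using that $X \setminus U$ has codimension $\geq 3$. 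Thus it suffices to prove (1), hence to resolve the single sheaf $\sh{W}_U$ over a suitable open $U \ni x$ containing every point of codimension $\leq 2$.

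The technical core is a local construction near the points of codimension $\geq 3$, carried out by descent. Away from such points, where the local rings have dimension $\leq 2$, the needed locally free surjection onto $\sh{W}$ can be produced by elementary means; so the task reduces to building, near each point $x_0$ of codimension $\geq 3$, a finite locally free sheaf with a surjection onto $\sh{W}$ that is compatible, in codimension $\leq 2$, with the part already constructed. For this I would pass to the formal completion and invoke formal-local descent (a glueing statement of Beauville--Laszlo type): such a datum glues from its restriction to $X \setminus \overline{\{x_0\}}$ together with a compatible datum on $\widehat{X}_{x_0}$. To produce the latter I would use an \emph{affine flat neighborhood} of $x_0$ --- an affine scheme $\operatorname{Spec} R$, flat and of finite type over $X$, whose image is an open neighborhood of $x_0$, available in this generality since the construction naturally passes through algebraic spaces --- together with flat descent along it. The problem then becomes the ring-theoretic one announced in the abstract: over $R$ --- which, being excellent and reduced, has reduced completion --- and the associated descent ring, construct a finite module, equipped with a descent datum, realizing a surjection onto the pullback of $\sh{W}$ and free at every prime of height $\leq 2$.

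I would then carry out the module construction directly. Affine-locally the module is a quotient of a free module; the work is to correct this so as to satisfy the descent datum while keeping, or restoring, freeness in codimension $\leq 2$. The feature making codimension $\leq 2$ special is that over a two-dimensional regular local ring every second syzygy is free, so once the module is arranged to be a second syzygy it is automatically locally free at the regular primes of height $\leq 2$; the non-regular primes of height $\leq 2$ are then handled directly, over the normalization. Patching the local models produces the required module, hence --- unwinding the descent and the formal glueing --- a finite locally free sheaf surjecting onto $\sh{W}$ over an open set. The patching succeeds on the complement of a closed subset of $X$ all of whose points have codimension $\geq 3$; choosing this subset to avoid $x$ --- automatic unless $\operatorname{codim}(x) \geq 3$, in which case the possibility of doing so is itself part of what must be shown --- one obtains the open $U$ of part (1), and with it, via the first stage, the resolution of part (2).

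I expect the main obstacle to be the descent step: arranging the formal glueing so that it \emph{simultaneously} preserves the surjection onto $\sh{W}$ and local freeness at every prime of height $\leq 2$, with control only over the completion and over an affine \emph{flat} --- not \'etale --- neighborhood. The elementary-transformation bookkeeping that reconciles the glueing datum while protecting freeness in codimension $\leq 2$ rests on second syzygies being free in dimension $\leq 2$ and has no counterpart at height-three primes --- which is precisely why the conclusion stops at codimension three. A secondary, more clerical difficulty is the first stage: producing a sheaf $\sh{W}_Y$ that exists on every $Y$ arising in the argument yet is strong enough to force the full resolution property.
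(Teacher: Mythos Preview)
Your outline has the architecture inverted. You propose to resolve $\sh{W}$ ``by elementary means'' on the locus where the local rings have dimension $\leq 2$ and then to extend across each point $x_0$ of codimension $\geq 3$ by Beauville--Laszlo glueing. But the set of codimension $\leq 2$ points is not open (already in $\mathbb{A}^3$ it is the set of non-closed points), and any open set that \emph{does} contain every codimension $\leq 2$ point is again an arbitrary reduced excellent separated scheme --- resolving $\sh{W}$ on such an open is exactly the content of part (1), not an elementary preliminary. Conversely, nothing needs to be done at codimension $\geq 3$ points: those are precisely the points one is permitted to discard, so there is no glueing problem there. Your description of an affine flat neighborhood as ``flat and of finite type over $X$, whose image is an open neighborhood of $x_0$'' is also not what is meant: in the paper a flat neighborhood is the formal completion along a closed subspace $C$, it is essentially never of finite type, and its image is the set of points specializing into $C$ rather than an open set.

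The paper runs in the opposite direction. Flat neighborhoods are used to show that around any prescribed finite set of codimension $\leq 2$ points (together with the given $x$) one can find an open $U_0\subset X$ fitting into a flat Mayer--Vietoris square whose \emph{both} pieces are affine: a dense affine open $\Spec A\subset U_0$ and the completion $\Spec \hat C$ along its affine closed complement $\Spec B$. It is only because both pieces are affine that the descent reduces to the triples statement about modules; for your square $(X\smallsetminus\overline{\{x_0\}},\ \widehat X_{x_0})$ the open part is not affine and the reduction to rings fails. Having resolved $\sh{F}$ on $U_0$, the paper does not glue across $x_0$ but instead \emph{pushes forward} along $i_0:U_0\hookrightarrow X$, after arranging that $U_0$ contains every singular codimension $2$ point and every codimension $2$ point where $\sh{F}$ has depth $<2$. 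At the remaining, regular, codimension $2$ points the pushforward $(i_0)_*\sh{V}'_0$ is then coherent and, by Auslander--Buchsbaum (this is the correct home for your second-syzygy observation), locally free. Summing finitely many such pushforwards and removing a codimension $\geq 3$ subset produces the open $U$ of part (1); your reduction of part (2) to part (1) via extension of the local resolution is then essentially the paper's Corollary~\ref{surjectionthree}.
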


\noindent In fact, our arguments yield stronger results, many of which cover algebraic spaces, see Theorem, \ref{thm:singlecoherent}, Theorem \ref{resolutionthree}, and Corollary \ref{surjectionthree} for more precise statements. 

The existence of locally free resolutions on schemes is a well studied question and perhaps the most striking result in this direction is the following result due to Totaro and Gross.

\begin{theorem*} (\cite[Thm. 1.1]{TotaroRes} and \cite[Thm. 1.1]{GrossRes}) A quasi-compact and quasi-separated  algebraic space $X$ has the resolution property if and only if $X$ can be written as the quotient of a free action of $\mathrm{GL}_n$ on a quasi-affine scheme. \end{theorem*}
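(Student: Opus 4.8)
The statement is an equivalence, so the plan is to prove the two implications separately; the substantive one is that the resolution property forces a quotient presentation. For the ``if'' direction, suppose $X$ is the quotient $Y/\mathrm{GL}_n$ of a quasi-affine scheme $Y$ carrying a free $\mathrm{GL}_n$-action. Since the action is free and $\mathrm{GL}_n$ is flat and affine, $Y\to X$ is a $\mathrm{GL}_n$-torsor and an affine morphism, so the classifying morphism $f\colon X\to B\mathrm{GL}_n$ is representable and quasi-affine --- pulling back the universal $\mathrm{GL}_n$-torsor along $f$ returns $Y$. I would then combine two inputs. First, $B\mathrm{GL}_n$ has the resolution property: this is the representation theory of $\mathrm{GL}_n$, namely that every $\mathrm{GL}_n$-representation is a quotient of a finite direct sum of tensor powers of $W\oplus W^{\vee}$ with $W$ the standard representation (reduce to polynomial representations, generated by tensor powers of $W$, then twist by powers of $\det$, using that $\det^{-1}$ is a direct summand of $\bigwedge^{n}W^{\vee}$ and that the trace $W^{\vee}\otimes W\to\mathcal{O}$ handles the trivial representation). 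Second, the resolution property is inherited along any representable quasi-affine morphism $f\colon X\to Z$: writing $X$ as a quasi-compact open of a $Z$-affine algebraic space $\bar X$, one extends a finite type sheaf on $X$ to a coherent sheaf on $\bar X$, pushes it to $Z$ (a filtered colimit of coherent sheaves), resolves a cofinal term by a vector bundle on $Z$, pulls it back, and uses that the counit $f^{*}f_{*}\to\operatorname{id}$ is surjective for affine $f$. With $Z=B\mathrm{GL}_n$ this gives the resolution property for $X$.

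For the converse, assume $X$ is quasi-compact, quasi-separated, and has the resolution property. The first step --- a standard consequence of quasi-compactness --- is to realize the resolution property, which a priori calls on an unbounded supply of vector bundles, by a single \emph{generating} vector bundle $\sh{E}$ of some rank $n$: one for which every finite type quasi-coherent sheaf is a quotient of $\sh{E}^{\oplus N}$ for suitable $N$. (Over a quasi-compact base the resolution property is witnessed by a \emph{finite} family of vector bundles --- an ample family --- and one takes $\sh{E}$ to be their direct sum.) Next, form the frame bundle
\[
Y \;:=\; \operatorname{Isom}_X\!\bigl(\mathcal{O}_X^{\,n},\,\sh{E}\bigr),
\]
a $\mathrm{GL}_n$-torsor over $X$. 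As torsors under the affine group $\mathrm{GL}_n$ are affine morphisms, $Y\to X$ is affine, hence $Y$ is a quasi-compact, quasi-separated algebraic space; the action on $Y$ is free --- being a torsor --- and $X\simeq[Y/\mathrm{GL}_n]=Y/\mathrm{GL}_n$. Everything now reduces to proving that $Y$ is quasi-affine: a quasi-affine algebraic space is automatically a scheme, and then $X$ is presented as required.

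For this --- the heart of the argument --- it suffices, $Y$ being quasi-compact and quasi-separated, to show that every quasi-coherent sheaf $\sh{M}$ on $Y$ is generated by its global sections, since this property forces quasi-affineness of such an algebraic space. Write $p\colon Y\to X$ and $\sh{N}:=p_{*}\sh{M}$; as $p$ is affine, $\sh{N}$ is quasi-coherent on $X$, $\Gamma(Y,\sh{M})=\Gamma(X,\sh{N})$, and $\sh{M}$ is globally generated exactly when $\sh{N}$ is generated, as a $p_{*}\mathcal{O}_Y$-module, by its global sections. Express $\sh{N}$ as the filtered colimit of its coherent $\mathcal{O}_X$-submodules $\sh{N}'$. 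By the generating property of $\sh{E}$, each $\sh{N}'$ is generated as an $\mathcal{O}_X$-module by the images of finitely many $\mathcal{O}_X$-linear maps $\sh{E}\to\sh{N}$. But on $Y$ the bundle $\sh{E}$ carries a tautological trivialization $p^{*}\sh{E}\cong\mathcal{O}_Y^{\,n}$, under which, by the $(p^{*},p_{*})$-adjunction, such a map is precisely an $n$-tuple of global sections of $\sh{M}$. Hence every $\sh{N}'$ lies in the $p_{*}\mathcal{O}_Y$-submodule of $\sh{N}$ generated by $\Gamma(X,\sh{N})$, and letting $\sh{N}'$ exhaust $\sh{N}$ shows that $\sh{N}$, hence $\sh{M}$, is globally generated.

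I expect the principal obstacle to be the first step of the converse: the resolution property only furnishes, for each coherent sheaf, \emph{some} resolving vector bundle, and extracting one bundle that works for all of them uniformly genuinely requires quasi-compactness together with a careful argument over a finite chart. A second, pervasive source of difficulty --- precisely what separates Gross's algebraic-space statement from Totaro's original scheme-theoretic one --- is that several facts used freely above for schemes must be re-established, or carefully cited, for algebraic spaces: that quasi-affineness of a quasi-compact and quasi-separated object is equivalent to ampleness of its structure sheaf and to global generation of all its quasi-coherent sheaves; that a quasi-affine algebraic space is a scheme; and that coherent sheaves extend across quasi-compact open immersions. Once the generating bundle is in hand, the quasi-affineness argument above is short and essentially formal.
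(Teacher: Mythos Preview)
The paper does not give a proof of this theorem: it is quoted in the introduction as background, with attribution to Totaro and Gross, and no argument is supplied. There is therefore no ``paper's own proof'' to compare your proposal against.

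That said, your sketch follows the strategy of the cited sources quite closely: the ``if'' direction via inheritance of the resolution property along the quasi-affine morphism $X\to B\mathrm{GL}_n$, and the ``only if'' direction by passing to the frame bundle of a suitable vector bundle and showing it is quasi-affine through global generation. One point deserves care. The notion of ``generating bundle'' you use --- a single $\sh{E}$ such that every finite-type sheaf is a quotient of some $\sh{E}^{\oplus N}$ --- is stronger than what the resolution property actually hands you; already on a projective scheme with ample $L$ one needs varying powers $L^{-n}$, not just copies of a fixed bundle. What one obtains (and what Gross works with) is a \emph{tensor generator}: a locally free $\sh{E}$ such that every coherent sheaf is a quotient of some $p(\sh{E},\sh{E}^{\vee})$ with $p\in\mathbb{N}[s,t]$. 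Your quasi-affineness argument survives this weakening unchanged, since on the frame bundle $p^*\sh{E}$ is trivial and hence so is every $p(p^*\sh{E},p^*\sh{E}^{\vee})$; adjunction then still converts the resolving maps into global sections. With that adjustment, and with the algebraic-space subtleties you already flag (extension of coherent sheaves, the characterization of quasi-affineness, and that a quasi-affine algebraic space is a scheme), the outline is sound and matches the literature.
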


\noindent As such, the resolution property generalizes the notion of quasi-projectivity: varieties embedded in $\mathbb{P}^n$ always arise as quotients of quasi-affine schemes by a free action of $\GL_1$. Moreover, a defining property of ample line bundles is that they (positively) generate all the coherent sheaves on a space. Since this property is so commonly used, it is no surprise that the resolution property has important and fundamental consequences across different areas of algebraic geometry: Hilbert's fourteenth problem over a general base (see \cite[Thm. 2]{Seshadri} and \cite[Thm 3.8]{ThomasonRes}), the moduli of polarized varieties (see \cite[Ques. 43]{MR2259254} and \cite[Conj. 42]{MR2259254}), the theory of group schemes (\cite{ConradDual}), the Brauer group (\cite{SidRes}), and algebraic K-theory (\cite[Sec. 2]{TotaroRes}). This justifies the importance of the following question of Totaro. It was originally stated for algebraic stacks but is still tantalizingly open in the absence of stabilizers: 
\\\\
\noindent \textbf{Totaro's question} (Totaro, \cite[Ques. 1]{TotaroRes}) Let $X$ be a separated scheme (or algebraic space), does $X$ have the resolution property? 
\\\\
Despite the utility of the resolution property, little progress has been made beyond algebraic surfaces. Even the case of smooth three dimensional algebraic spaces over a field is completely wide open and the problem seems to be beyond the reach of current methods. Before describing our strategy, we recall how to resolve a coherent sheaf $\sh{F}$ on a surface $X$:\begin{enumerate} 

\item First, we realize $\sh{F}$ as a quotient of a coherent sheaf which is locally free in codimension $\leq 1$ (see \cite[Prop. 1.8]{SurfaceRes}). 
The crucial idea here is that, using Chow's lemma, one can find a quasiprojective open subset containing all height one points. Now we may assume $\sh{F}$ is locally free away from finitely many points $p_1,...,p_n$ and so it suffices to extend local resolutions at the $p_i$ to all of $X$.

\item This extension problem is solved by constructing locally free sheaves with certain positivity properties (see \cite[Thm. 4.5]{SurfaceRes}). The idea is to use Chow's lemma to produce a projective birational cover and one then descends a positive bundle by modifying a sum of ample line bundles along a $1$-dimensional exceptional locus. To conclude, one annihilates the obstruction to extending these local resolutions by twisting with the aforementioned locally free sheaves. \end{enumerate}

The first step automatically fails in our setting because the codimension $2$ points need not admit a quasi-projective open neighborhood. Thus, we have to find a tractable replacement for such opens. Our solution is to algebraize formal-local neighborhoods of any fixed finite collection of codimension $2$ points, this process yields affine \emph{flat neighborhoods} (see Definition \ref{def:flatneighborhood}) and when glued along an affine \emph{open} subset of $X$, we obtain open subspaces $U_i \subset X$ which contains this finite collection of codimension $2$ points (see Proposition \ref{prop:affineMV}). Moreover recent results on flat descent allow us to reduce the problem on the various $U_i$ to the following result involving only modules over commutative rings (see Theorem \ref{resolution property from commutative algebra}). 

\begin{maintheorem}
\label{intro: resolution property from commutative algebra}
Suppose we have ring maps $R_i\ra R_{12}$ which are flat for $i=1,2$ with the property that the multiplication map $R_{12}\otimes_{R_2}R_{12}\ra R_{12}$ is bijective. Then every object $(M_1,M_2,\varphi) \in \mathrm{Mod}(R_1) \times_{\mathrm{Mod}(R_{12})} \mathrm{Mod}(R_2)$, where the two modules $M_1$ and $M_2$ are finitely generated modules and $\varphi:M_1 \otimes_{R_1} R_{12} \xrightarrow{\sim} M_2 \otimes_{R_2} R_{12}$ is an isomorphism, is the quotient of the triple $(R_1^{\oplus n}, R_2^{\oplus n}, A)$ for a suitable $n\geq 0$ and $A\in \GL_n(R_{12})$.
\end{maintheorem}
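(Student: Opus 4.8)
The plan is to construct the surjection by hand from generating sets, and the one observation that makes this possible is the following: if $x_1,\dots,x_s$ generate $M_1$ over $R_1$ and $y_1,\dots,y_t$ generate $M_2$ over $R_2$, then after base change \emph{both} families generate the single $R_{12}$-module $M_{12}:=M_2\otimes_{R_2}R_{12}$, which we identify with $M_1\otimes_{R_1}R_{12}$ via $\varphi$. Hence one may pass between the two generating sets by rectangular matrices over $R_{12}$: writing $\bar x_i:=\varphi(x_i\otimes 1)$ and $\bar y_j:=y_j\otimes 1$ in $M_{12}$, I would choose $U=(u_{ij})\in\Mat_{s\times t}(R_{12})$ and $V=(v_{li})\in\Mat_{t\times s}(R_{12})$ with $\bar y_j=\sum_i u_{ij}\bar x_i$ and $\bar x_i=\sum_l v_{li}\bar y_l$. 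Then I would put $n:=s+t$ and take the surjections $\Pi_1\colon R_1^{\oplus s}\oplus R_1^{\oplus t}\to M_1$ sending the $i$-th basis vector of the first summand to $x_i$ and the whole second summand to $0$, and $\Pi_2\colon R_2^{\oplus s}\oplus R_2^{\oplus t}\to M_2$ sending the first summand to $0$ and the $j$-th basis vector of the second to $y_j$.

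With these choices the condition that $(\Pi_1,\Pi_2)$ be a morphism of triples $(R_1^{\oplus n},R_2^{\oplus n},A)\to(M_1,M_2,\varphi)$ is the single equality $\varphi\circ(\Pi_1\otimes\mathrm{id})=(\Pi_2\otimes\mathrm{id})\circ A$ of maps $R_{12}^{\oplus n}\to M_{12}$; since $\Pi_1$ kills the second block of generators and $\Pi_2$ kills the first, only the lower half of $A$ enters this equation. It is then enough that the lower-left $t\times s$ block of $A$ send the $i$-th unit vector to coordinates $w_1,\dots,w_t$ with $\sum_l w_l\bar y_l=\bar x_i$ — for which $V$ works — and that the columns of the lower-right block lie in $\ker(\Pi_2\otimes\mathrm{id})$ — for which $I_t-VU$ works, because $\sum_l(I_t-VU)_{lj}\bar y_l=\bar y_j-\sum_i u_{ij}\bar x_i=0$. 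The upper half of $A$ is unconstrained by the morphism condition, and the crux of the argument is that the completion
\[
A=\begin{pmatrix} I_s & -U\\ V & I_t-VU\end{pmatrix}
\]
is invertible, with inverse $\begin{pmatrix} I_s-UV & U\\ -V & I_t\end{pmatrix}$, as one verifies by a direct block multiplication using only associativity in $\Mat(R_{12})$. Since $\Pi_1$ and $\Pi_2$ are surjective, $(\Pi_1,\Pi_2)$ then exhibits $(M_1,M_2,\varphi)$ as a quotient of $(R_1^{\oplus n},R_2^{\oplus n},A)$, which is exactly what is claimed.

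The only genuinely nontrivial point is locating the matrix $A$; everything else is index bookkeeping, and I do not expect any of it to be an obstacle once $A$ is in hand. Finite generation of $M_1$ and $M_2$ is used precisely to keep $n=s+t$ finite, and I expect the flatness of the $R_i\to R_{12}$ together with the condition $R_{12}\otimes_{R_2}R_{12}\cong R_{12}$ to enter only in guaranteeing that the ambient fibre-product category is well behaved — in particular abelian with kernels computed componentwise — so that ``quotient'' has its usual meaning and componentwise surjectivity of $(\Pi_1,\Pi_2)$ suffices.
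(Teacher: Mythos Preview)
Your proof is correct and takes a genuinely different route from the paper's. The paper proceeds in two steps: first it reduces to the case $M_1=R_1^{\oplus d}$ by forming the fiber product $F=R_{12}^{\oplus d}\times_{M_2\otimes R_{12}}M_2$ and extracting a large enough finitely generated submodule $N_2\subset F$ --- this is precisely where flatness of $R_2\to R_{12}$ and the bijectivity of $R_{12}\otimes_{R_2}R_{12}\to R_{12}$ enter, to ensure the pullback square remains cartesian after tensoring and that $F\otimes_{R_2}R_{12}\cong R_{12}^{\oplus d}$. In the second step, with $M_1$ now free of rank $d$, the paper chooses a surjection $R_2^{\oplus n}\to M_2$, stabilizes so that its kernel over $R_{12}$ becomes free, and assembles $A$ from a chosen section.

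Your argument collapses both steps into one explicit block matrix built directly from change-of-generator data. In fact your $A$ factors as
\[
A=\begin{pmatrix}I_s&0\\V&I_t\end{pmatrix}\begin{pmatrix}I_s&-U\\0&I_t\end{pmatrix},
\]
a product of elementary matrices, which gives an alternative one-line proof of invertibility. More strikingly, your construction never actually invokes either hypothesis of the theorem: you use only that $M_1,M_2$ are finitely generated and that $\varphi$ is an isomorphism. Your closing remark undersells this --- cokernels in $\catA$ are computed componentwise regardless of flatness (the paper notes this in its proposition on $\catA$), so componentwise surjectivity of $(\Pi_1,\Pi_2)$ already yields an epimorphism, and your argument in fact establishes the statement with the flatness and multiplication-map hypotheses deleted. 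Those hypotheses matter not here but downstream, in identifying $\catA$ with $\Qcoh(X)$ for the flat Mayer--Vietoris square. The paper's approach gives a more structural picture of the intermediate triple when $M_1$ is free; yours is shorter, more elementary, and proves slightly more.
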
 

The second step fails in our context for two reasons: first, we work without properness assumptions so cohomological vanishing arguments need not apply. The second issue, even if we only consider proper $k$-schemes, is that we do not know how to descend a desirable locally free sheaf along a higher dimensional projective birational cover. In particular, recall that it is not even known how to descend a single non-trivial locally free sheaf for the case of proper threefolds. We evade these issues entirely by delicately constructing the open subsets $U_i$, then pushing forward local surjections $\sh{V}_i \to \sh{F}|_{U_i}$ and taking their direct sum. To ensure that the push-forwards retain the essential information of $\sh{F}$, we also choose the opens $U_i$ so they contain most of the non-Cohen-Maucalay locus of $\sh{F}$, then depth considerations finish the argument.

\begin{Remark} It is natural to ask to what extent Theorem \ref{maintheorem} holds true for algebraic stacks. There are $\mathbb{G}_m$-gerbes over normal separated surfaces which do not enjoy the resolution property (see \cite[Thm. 1.1]{TotaroRes} and \cite[Ex. 3.12]{EHKV}), so the result cannot hold for all stacks with affine diagonal. However, it would be interesting to know if Theorem \ref{maintheorem} holds for algebraic stacks with \emph{finite} diagonal. Some special cases in this direction are known (see \cite{SidRes} and \cite{KreschVistoli}) but the general case seems to be out of reach. Interestingly, the existence of affine flat neighborhoods is a crucial ingredient in the present paper and it has also been used to prove Theorem \ref{maintheorem} for $\mu_n$-gerbes (see \cite[Cor. 2]{mathur2020experiments} and its proof). However, the method we use to construct locally free resolutions (see Theorem \ref{intro: resolution property from commutative algebra}) only works for \emph{algebraic spaces} of a special form (see Corollary \ref{MVresolution}). \end{Remark}

\noindent \textbf{Structure of the paper}: In the Section \ref{sec:common} we establish a necessary refinement of the fact that any finite collection of codimension $\leq 1$ points admits a common affine open neighborhood. Since this is not adequate for codimension $2$ points, we show how to improve this result further using affine flat neighborhoods in Section \ref{Flat neighborhoods}. Then we show that these new neighborhoods have the resolution property using an algebraic argument in Section \ref{Triples}. In Section \ref{resolutions}, we prove that on a very general class of algebraic spaces, any coherent sheaf can be written as the quotient of a locally free sheaf on an open set containing all codimension $\leq 2$ points. In Section \ref{strong}, we explain why it is often enough to resolve a \emph{single} coherent sheaf by a locally free sheaf. This is known for schemes but is nontrivial for algebraic spaces. This yields the main theorem and we conclude by showing one may remove the reducedness hypothesis when working over a field of positive characteristic. 
\\\\
\noindent \textbf{Acknowledgments} We would like to thank Jarod Alper, Jack Hall, Laurent Moret-Bailly, David Rydh, and Wilberd van der Kallen for useful discussions and comments. This research was done in the framework of the research training group GRK 2240: Algebro-geometric Methods in Algebra, Arithmetic and Topology, which is funded by the Deutsche Forschungsgemeinschaft. The first author was also supported by the Swedish Research Council under grant no. 2016-06596 while he was in residence at Institut Mittag-Leffler in Djursholm, Sweden during the Fall of 2021.
\\\\
\noindent \textbf{Notation and conventions} We briefly recall the definition of an \emph{algebraic space} $X$ and some associated notions: points, local rings, and the topological space associated to $X$.  A functor $X: \Sch_{S}^{\text{op}} \to \Set$ is called an \emph{algebraic space} if $X$ is a sheaf on the big fppf site $S_{\text{fppf}}$, the diagonal $\Delta_S: X \to X \times_S X$ is representable by schemes, and there is a surjective \'etale morphism $U \to X$ where $U$ is a scheme.

A \emph{point} of $X$ is an equivalence class of morphisms from fields $x: \Spec K \to X$ where two morphisms are considered equivalent if there is a third dominating both. The points form a set $|X|$. Moreover, there exists a unique topology on the sets of points of algebraic spaces so that it coincides with the underlying topology of $X$ if $X$ is a scheme, morphisms $X \to Y$ induce continuous morphisms $|X| \to |Y|$, and a \'etale morphism from a scheme $U \to X$ induces a continuous open map, see \cite[Tag 03BT]{stacks}. If we suppose that $X$ is quasi-separated, then $|X|$ naturally corresponds to the set of monomorphisms $x: \Spec k \to X$ and the topology is well-behaved (see \cite[Tags 03IG, 06NI]{stacks}). For instance, if $X$ is quasi-separated every irreducible closed subset of $|X|$ has a unique generic point (see \cite[Tag 06NJ]{stacks}). \emph{All algebraic spaces considered in this paper will be quasi-separated.}

Following \cite[Tag 04KG]{stacks}, every point $x \in |X|$ can be represented by a choice of geometric point $\bar{x}: \Spec \overline{k} \to X$ and taking the colimit over all \'etale neighborhoods of $\bar{x}$ yields a strictly henselian local ring $\mathcal{O}_{X,\bar{x}}$ and a morphism $\iota_{\bar{x}}: \Spec (\mathcal{O}_{X,\bar{x}}) \to X$ which we call the \emph{strict henselization} of $x$ at $X$. Note that the isomorphism type of $\Spec (\mathcal{O}_{X,\bar{x}})$ over $X$ does not depend on the choice of geometric point $\bar{x}$. Lastly, if $\sh{F}$ is a quasi-coherent sheaf on an algebraic space $X$, and $x \in |X|$ is a point, then we define the associated stalk to be $\iota_{\bar{x}}^*(\sh{F})=\sh{F}_{\bar{x}}$ for some geometric point $\bar{x}$ lying in $x$. We will often write $x \in X$ to refer to a point $x \in |X|$.

 For an integer $n \geq 0$, a quasi-coherent sheaf $\sh{F}$ on an algebraic space $X$ satisfies Serre's condition $S_n$ if for every $x \in X$, we have 
 \[\text{depth}_{\mathcal{O}_{X,\bar{x}}}(\mathcal{F}_{\bar{x}}) \geq \text{min}(n,\text{dim}(\text{Supp}(\mathcal{F}_{\bar{x}})))\]
and an algebraic space is $S_n$ if $\mathcal{O}_X$ is. Lastly, when we write locally free we always mean locally free of finite rank.
 

\section{Common affine open neighborhoods} \label{sec:common}

The goal of this section is to show that certain finite collections of points admit common affine open neighborhoods. In \cite[Tag 0ANN]{stacks} it is shown that this holds true for points with codimension $\leq 1$ but we require a stronger version that allows us to add in a further finite collections of points which themselves already admit an affine open neighborhood. This was achieved in the presence of noetherian ground rings in the proof of \cite[Thm. 1.5]{SurfaceRes} using a version of Chow's lemma. However, since we wish to remove this restriction, we require a new argument.

The following lemma from elementary topology will be used throughout the paper.

\begin{lemma} \label{open set}
Let $X$ be a topological space, $U_1\subset X$ an open set, $D=X\smallsetminus U_1$ the complementary closed set,
and $D'\subset D$ open.  Then $U_1\cup D'\subset X$ is an open set in $X$.
\end{lemma}

\proof
Write  $D'=D\cap V$ for some   some open set $V\subset X$. We have to check that $X\smallsetminus (U_1\cup D')$ is closed.
The latter  equals
$$
(X\smallsetminus U_1)\cap \left((X\smallsetminus D)\cup (X\smallsetminus V)\right) = (D\cap(X\smallsetminus D)) \cup (D\cap (X\smallsetminus V)),
$$
which is the intersection of the closed sets $D$ and $X\smallsetminus V$.
\qed

Recall that if $X$ is a noetherian scheme then there exists a normalization morphism $X' \to X$ by \cite[Tag 035N]{stacks} which is integral and that $X'$ can be viewed as the normalization of the reduction $X_{\text{red}}$ by \cite[Tag 035O]{stacks}.

\begin{Theorem} \label{prop:CKcodim1new}
Let $X$ be a scheme that is separated and noetherian, and assume that the normalization map $X'\ra X$ remains noetherian. Suppose $a_1,\ldots,a_r\in X$ are points that admit a common affine open neighborhood, and $\zeta_1,\ldots,\zeta_s\in X$ be  points of codimension at most one. Then $a_1,\ldots,a_r,\zeta_1,\ldots,\zeta_s\in X$ admit a common affine open neighborhood.
\end{Theorem}

Before giving the proof, we explain some of the necessary background. The following definition is due to Ferrand (see \cite[Thm. 5.4]{MR2044495}).

\begin{definition} \label{def:AF} A scheme $X$ is said to have the \emph{AF-property} if every finite set of points
$a_1,\ldots,a_r\in X$ admits a common affine open neighborhood. \end{definition}

\begin{Example} \label{ex:AF} The AF-property holds if $X=\text{Proj}(A)$ is the homogeneous spectrum of a graded ring, but fails in general, as observed by Nagata \cite{MR97406}. The simplest separated examples seem to be the normal surfaces $X$ constructed in \cite{MR1726231}. These are birational to the product of an elliptic curve and the projective line and have $\Sing(X)=\{a_1,a_2\}$. Note that a normal scheme $X$ that is separated and of finite type over a ground field $k$ has the AF-property if and only if it admits an ample sheaf (see \cite[Cor. 2]{MR3096912}). On the other hand, if $X$ has the AF-property and $f:X\ra Y$ is a finite birational morphism, then $Y$ also 
has the AF-property (see \cite[Thm. 5.4]{MR2044495} or \cite[Thm. 3.3]{GrossRes} for a generalization), but often fails to carry an ample sheaf (see e.g. \cite[Cor. 1]{MR284437}).\end{Example}

Some preliminary observations are in order: if $U_\lambda\subset X$, $\lambda\in L$ is a filtered inverse system of affine open sets in a scheme $X$, then the  inverse limit $U_\infty=\invlim U_\lambda$ exists as a scheme, (see \cite[Sec. 8.2]{MR217086}). It is affine, and the coordinate ring of $U_\infty$ is the direct limit of the coordinate rings for $U_\lambda$. Moreover, the formation commutes with passing to the underlying topological space. 
The canonical map $\iota:U_\infty\ra X$ is injective, its set-theoretical image is the intersection $\bigcap U_\lambda$, and the Zariski topology on the domain coincides with the subspace topology on the image.
Moreover, for all image points $\iota(x)\in X$ the canonical map $\mathcal{O}_{X,\iota(x)}\ra \mathcal{O}_{U_\infty,x}$ is bijective. By abuse of notation, we also write the monomorphism $\iota$ as an inclusion  $U_\infty\subset X$, and call it an \emph{affine pro-open set}.
 
Now let $a_1,\ldots,a_r\in X$ be points admitting a common affine open neighborhood. Let $U_\lambda$, $\lambda \in L$ be the ordered set of all such neighborhoods. We call $U_\infty\subset X$ the \emph{minimal affine pro-neighborhood} of the $a_1,\ldots,a_r \in X$. It comprises all points $x\in X$ that specializes to one of the $a_i$.
One may compute it explicitly as follows: Fix an index $\lambda\in L$ and write  $U=U_\lambda$. Let $R=\Gamma(U,\mathcal{O}_X)$ be the coordinate ring, and $\primid_i\subset R$ be the prime ideals corresponding to the $a_i\in U$. With the multiplicative system $S=R\smallsetminus (\primid_1\cup\ldots\cup\primid_r)$ we get a canonical identification $U_\infty=\Spec(S^{-1}R)$. If $X$ is integral one furthermore has 

$$
S^{-1}R=R_{\primid_1}\cap\ldots\cap R_{\primid_r} =\mathcal{O}_{X,a_1}\cap \ldots\cap \mathcal{O}_{X,a_r},
$$

\noindent where the intersection takes place in the function field $F=\mathcal{O}_{X,\eta}$ of our scheme (\cite[Ch. 2, Sec. 3, No. 5 Prop. 17]{MR979760}).

\proof
In light of \cite[Cor. 4.5.9]{EGAI.2}, it suffices to treat the case that $X$ is reduced.
We proceed by induction on $s\geq 0$. The case $s=0$ is trivial. Suppose now that 
$s\geq 1$, and that the assertion is true for $s-1$. Setting $a_{r+j}=\zeta_j$ for $1\leq j\leq s-1$
and replacing $r$ by $r+s-1$, 
we are reduced to the case $s=1$. To simplify notation   put $\zeta=\zeta_1$.

We first treat the case that the scheme $X$ is integral and normal. The assertion is immediate if $\zeta\in X$
is the generic point. Suppose now that the local ring $\mathcal{O}_{X,\zeta}$ has dimension one.
Choose affine open sets $U,V\subset X$ that contain  the $a_1,\ldots,a_r$ and $\zeta$, respectively.
We are done if $\zeta$ belongs to $U$, so     assume that  $\zeta\not\in U$.
Without restriction, we may assume $X=U\cup V$. Write $U=\Spec(R)$,   let $\primid_i\subset R$ be the prime
ideals corresponding to the $a_i\in U$, and consider the minimal affine pro-neighborhood $U_\infty=\Spec(S^{-1}R)$ as above.
This lies in $U$, and thus does not contain $\zeta$. 

Let $F=\text{Frac}(R)=\mathcal{O}_{X,\eta}$ be the function field of the integral scheme $X$. Each point $x\in X$
yields a   subring $\mathcal{O}_{X,x}\subset F$. Recall that  $x\in \overline{\{y\}}$
if and only if $\mathcal{O}_{X,x}\subset \mathcal{O}_{X,y}$, according to \cite[Cor. 8.5.7]{EGAI.2}.
We now check that similar facts hold for semi-local rings instead of local rings.
Seeking a contradiction, assume $S^{-1}R \subset \mathcal{O}_{X,\zeta}$.
The fiber product $\Spec(S^{-1}R)\times_X\Spec(\mathcal{O}_{X,\zeta})$ is affine, and its coordinate ring $C$
is generated by $S^{-1}R$ and $\mathcal{O}_{X,\zeta}$, because $X$ is separated. Moreover, the  projections turn $\Spec(C)$
into pro-affine sets inside the factors $\Spec(S^{-1}R)$ and $\Spec(\mathcal{O}_{X,\zeta})$, and we may regard $\Spec(C)\subset X$
as a pro-affine set. It comprises the points $x\in X$ that specialize to  one of the $a_i$, and also to $\zeta$.
In particular $\zeta$ does not belong to  $\Spec(C)$. Using that $C$ is contained in  $F$, and is generated inside $F$
by the subrings  $S^{-1}R$ and $\mathcal{O}_{X,\zeta}$, we infer  $C=\mathcal{O}_{X,\zeta}$ and conclude $\zeta\in\Spec(C)$, contradiction.
Summing up,  $S^{-1}R\not\subset \mathcal{O}_{X,\zeta}$.
Choose an element $f\in S^{-1}R$ that does not belong to $\mathcal{O}_{X,\zeta}$.
After shrinking the affine open set $U$, we may assume that $f\in R$, and  regard it as a local
section $f\in \Gamma(U,\mathcal{O}_X)$. 

Recall that we are assuming that $X$ is normal.
By the Serre Criterion, the local ring $\mathcal{O}_{X,\zeta}$ is a discrete valuation ring.
Let $\val:F^\times \ra \mathbf{Z}$ be the corresponding valuation. Then $\val(f)<0$,
thus $f^{-1}\in F^\times$ belongs to the maximal ideal $\maxid_\zeta$. After shrinking the affine open set $V$, we may 
assume $f^{-1}\in\Gamma(V,\mathcal{O}_X)$.
All irreducible components of $D=X\smallsetminus U$ have codimension one (\cite[Cor. 21.12.7]{EGAIV4}). Choose an open set $D_0\subset D$
that contains $\zeta\in D$ but no other generic point of $D$. Then   $U\cup D_0\subset X$ is an open set, according to Lemma \ref{open set}.
Shrinking $V$ further, we may assume that $D=\overline{\{\zeta\}}$. In turn, the effective Cartier-divisor $V(f^{-1})\subset V$
coincides with the complement $D=X\smallsetminus U$ as closed sets.
  
By construction we have $f,f^{-1}\in\Gamma(U\cap V,\mathcal{O}_X)$. In turn, this gives a cocycle $f^{-1}\in\Gamma(U\cap V,\mathcal{O}_X^\times)$
and thus an invertible sheaf $\sh{L}$ on $X$. It has $\sh{L}|_U=\mathcal{O}_U$ and $\sh{L}|_V=\mathcal{O}_V$, and the 
global sections are pairs $s=(s_U,s_V)$ satisfying $f^{-1}s_U=s_V$ on the overlap $U\cap V$.
In particular, we have the global sections 
$$
s_0=(1,f^{-1})\quadand s_1=(f,1).
$$
Consequently, $\sh{L}$ is globally generated,
and thus defines a morphism 
$h:X\ra P(X,\sh{L})$,
taking values in the homogeneous spectrum $P=P(X,\sh{L})$ of the graded ring $R(X,\sh{L})=\bigoplus_{t\geq 0} H^0(X,\sh{L}^{\otimes t})$.
The quasi-coherent sheaf $\mathcal{O}_P(1)$ is invertible on the union of all $D_+(s)\subset P$,  $s\in\Gamma(X,\sh{L})$. The morphism $h:X\ra P$ 
factors over this union, and we have $\sh{L}=h^*(\mathcal{O}_P(1))$. Note that the points $h(a_1),...,h(a_r), h(\zeta)$ all land in an open subscheme $Y \subset P$ over which $\mathcal{O}_P(1)$ is an ample invertible sheaf. Thus, there is an affine open subscheme $W \subset P$ which contains these points and where $\mathcal{O}_P(1)|_W$ is a trivial line bundle. So by replacing $P$ with $W$ and $X$ with $h^{-1}(W)$ we may suppose that $\sh{L}$ is a trivial line bundle. We will show that it is ample after perhaps further shrinking $X$.

By construction, the non-zero locus $X_{s_0}=U$ is affine. Choose a local section $\varphi\in\Gamma(U,\sh{L})$ that vanishes on the closed set $U\smallsetminus V$,
but not at the generic point $\eta\in U$. For some $n\geq 0$, the local section $\varphi\cdot s_0^{\otimes n}|U$ extends
to some global section $s_2\in \Gamma(X,\sh{L}^{\otimes 1+n})$. The corresponding effective Cartier divisor $H\subset X$
is of the form $H=mD+H'$, for some multiplicity $m\geq 0$ and $D\not\subset H'$. Likewise, our section $s_0\in \Gamma(X,\sh{L})$
yields an effective Cartier divisor $H_0=dD$ for some $d\geq 1$. Thus $s=s_2^{\otimes d}\otimes s_0^{\otimes -m}\in \Gamma(X,\sh{L}^{dn+d-m})$
vanishes on $X\smallsetminus V$ but not at $\zeta$. Thus the non-zero locus $X_s$ contains $\zeta$, lies in $V$,
and there takes the form $V'=D(s|_V)$, whence is affine. It follows that $\mathcal{L} \cong \mathcal{O}_X$ is ample. Therefore $X$ is quasi-affine by \cite[Tag 01QE]{stacks} and so it must have the AF-property.

This settles the case that $X$ is integral and normal. It remains to treat the general case where $X$ is reduced.
Let $\nu:X'\ra X$ be the normalization map, which is integral with set-theoretically finite fibers by \cite[Prop. 4.8.2]{MR2266432}, in particular closed and affine. 
The fiber $\nu^{-1}(\zeta)$ consists of finitely many codimension-one points,
and $ \nu^{-1}(\{a_1,\ldots, a_r\})$ is a finite set contained in some affine open set.
We saw above that there is an affine open set $U'\subset X'$ containing $\nu^{-1}(\zeta)$ and the $\nu^{-1}(a_i)$.
Consider the complementary closed set $Z'=X'\smallsetminus U'$.
The image $\nu(Z')\subset X$ is closed and disjoint from $a_1,\ldots,a_r$ and $\zeta$.
Thus 
$$
\nu^{-1}(X\smallsetminus \nu(Z')) = X'\smallsetminus \nu^{-1}(\nu(Z')) \subset X'\smallsetminus Z'=U'
$$
is quasiaffine. 
Replacing $X$ by the complement of $\nu(Z')$, we thus may assume that $Z'$ is empty and $X'$ is quasiaffine.
In particular, $X'$ has the AF-property and by \cite[Thm. 3.3]{SurfaceRes} the scheme $X$ has the AF-property.
In particular, the points $a_1,\ldots,a_r,\zeta\in X$ admit a common affine open neighborhood. \qed

\medskip

The preceding result on schemes can be improved using the fact that codimension $\leq 1$ points on algebraic spaces are schematic. 

\begin{corollary} \label{cor:codim1}  Let $X$ denote a separated noetherian algebraic space whose normalization $X'$ remains noetherian. Suppose $\zeta_1,...,\zeta_n, a_1,...,a_r \in X$ are as in the statement of Theorem \ref{prop:CKcodim1new}. Then there is an open affine subscheme $U \subset X$ which contains all of $a_1,..,a_r,\zeta_1,...,\zeta_n$.
\end{corollary}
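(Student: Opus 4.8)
The plan is to reduce the assertion to the scheme case, Theorem \ref{prop:CKcodim1new}, by carving out of $X$ an open subspace which contains all the points in question and is a scheme.

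Since each $\zeta_j$ has codimension at most one, it lies in the schematic locus of $X$; choose an open subscheme $W_j \subset X$ with $\zeta_j \in W_j$ (see \cite[Tag 0ADD]{stacks}). Let $V \subset X$ be the affine open neighborhood of $a_1, \dots, a_r$ furnished by the hypothesis; it is in particular an open subscheme. I would then form $\tilde{X} := W_1 \cup \dots \cup W_n \cup V$. The first point to verify is that a finite union of open subschemes of $X$ is again a scheme: two open subschemes are glued along their common open subscheme, which is a scheme, and the induced morphism to $X$ is injective on points and an open immersion over each of the two pieces, hence an open immersion, so the union is an open subscheme of $X$; iterating over the $n+1$ members gives that $\tilde{X}$ is an open subscheme of $X$, and by construction $a_1, \dots, a_r, \zeta_1, \dots, \zeta_n \in \tilde{X}$.

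Next I would confirm that $\tilde{X}$ meets the hypotheses of Theorem \ref{prop:CKcodim1new}. It is noetherian, and separated because it is an open subspace of the separated $X$. Its normalization is the restriction $X' \times_X \tilde{X}$ of $X' \to X$ to the open $\tilde{X}$ (normalization commutes with open immersions); this is a scheme, being the normalization of the scheme $\tilde{X}$, and it is an open subspace of the noetherian algebraic space $X'$, hence noetherian. Moreover, inside $\tilde{X}$ the points $a_1, \dots, a_r$ still admit the common affine open neighborhood $V$, and each $\zeta_j$ still has codimension at most one, the codimension of a point being unchanged under passage to an open subspace containing it.

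Applying Theorem \ref{prop:CKcodim1new} to $\tilde{X}$ then produces a common affine open neighborhood $U$ of $a_1, \dots, a_r, \zeta_1, \dots, \zeta_n$ in $\tilde{X}$, and since $\tilde{X}$ is open in $X$ this $U$ is the required affine open subscheme of $X$. Beyond Theorem \ref{prop:CKcodim1new} itself, the only real content is the passage from the algebraic space $X$ to the scheme $\tilde{X}$; I expect the step needing the most care is checking that $\tilde{X}$ is a scheme — which relies on the fact that a morphism restricting to an open immersion over each member of a cover is an open immersion once it is known to be a monomorphism — together with the compatibility of normalization with restriction to opens used to transport the noetherian-normalization hypothesis. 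Both are routine once set up properly.
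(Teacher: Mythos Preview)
Your proof is correct and follows essentially the same approach as the paper: reduce to the scheme case by passing to an open subscheme containing all the points, then invoke Theorem~\ref{prop:CKcodim1new}. The only difference is cosmetic: the paper replaces $X$ directly by its largest open subscheme $W$ (noting via \cite[Tag 0ADD]{stacks} that the codimension $\leq 1$ points, and a fortiori the $a_i$, already lie in $W$), whereas you build a specific open subscheme as a finite union $\tilde X = W_1 \cup \cdots \cup W_n \cup V$ and check by hand that such a union is a scheme. Your extra gluing argument is unnecessary once one knows the schematic locus is itself an open subscheme, but it is not wrong.
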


\begin{proof} By \cite[Tag 0ADD]{stacks}, there is an open subscheme $U \subset X$ containing all points $x \in |X|$ whose strictly local rings have $\text{dim} \mathcal{O}^{\text{sh}}_{X,x} \leq 1$. Thus, all the points in the statement belong to the largest open subscheme $W \subset X$. Thus, we may suppose that $X$ is a scheme and to conclude we apply Theorem \ref{prop:CKcodim1new}. \end{proof}

\begin{remark} The previous results are sharp in the sense that there exists a smooth proper $k$-scheme $X$ and two points $x,\zeta \in X$ where $x$ is closed and $\zeta$ is codimension $2$ for which there is no quasiprojective open subset $U \subset X$ containing both $x$ and $\zeta$. As an example consider Hironaka's well-known construction of a smooth non-projective threefold which has two rational curves $C,D \subset X$ whose union is algebraically equivalent to $0$. In this situation, let $x \in C$ and $\zeta$ be the generic point of $D$. If $U \subset X$ was a quasi-projective open subset containing both points, then there would be an affine open subset $V \subset U$ containing both points. However, the divisor $H=X \smallsetminus V$ would intersect $C \cup D$ nontrivially, contradicting the fact that this reducible curve is algebraically trivial. Later we will show that we may find a desirable open neighborhood of two such points $U \subset X$ if we relax the quasi-projectivity requirement (see Proposition \ref{prop:affineMV}). \end{remark}

\section{Flat neighborhoods} \label{Flat neighborhoods} 
In general, algebraic spaces are not covered by affine open subschemes. However, given any finite collection of codimension $\leq 2$ points $\zeta_1,...,\zeta_n$, we will construct an open neighborhood around the $\zeta_i$ which satisfies the resolution property. We begin with some definitions.

\begin{definition} \label{def:flatneighborhood} Let $C \subset X$ be a closed subspace of an algebraic space, a \emph{flat neighborhood} of $C$ is a flat morphism $U \to X$ with the following property: for any test scheme $T$ and any morphism $T \to X$ with $T \times_X (X \smallsetminus C) = \varnothing$, the first projection $T \times_X U \to T$ is an isomorphism. \end{definition}

\begin{remark} \label{rem:flatnotherian} Suppose $C \subset X$ is a closed subspace defined by the ideal sheaf $\sh{I}_C$ and $C_i$ the closed subspace defined by $\sh{I}_C^{i+1}$ for $i \geq 1$, then for any flat neighborhood $U \to X$ of $C$, the first projection $C_i \times_X U \to C_i$ is an isomorphism. In fact, if $X$ is locally noetherian the converse holds: if $U \to X$ is a flat morphism and for every $i \geq 1$ the map $C_i \times_X U \to C_i$ is an isomorphism, then $U \to X$ is a flat neighborhood of $C$ (see \cite[Lem 3.2, Lem. 3.3]{hall2016mayer}). \end{remark}

We will study fpqc coverings of the form $(U_i \to X)_{i=1,2}$ where $U_1 \to X$ is a quasi-compact open immersion and $U_2 \to X$ is a flat neighborhood of the closed subspace $C=X \smallsetminus U_1$ endowed with the reduced structure. Using the notation of \cite{hall2016mayer}, the resulting cartesian squares are called \emph{flat Mayer --Vietoris} squares:

\[\begin{tikzcd}
  U_1 \times_{X} U_2 \arrow[d] \arrow[r] & U_2 \arrow[d]  \\
  U_1 \arrow[r] &  X
  \end{tikzcd}
\]
\\\\
Now we explain how to construct affine flat neighborhoods. Let $X$ be a noetherian algebraic space with \emph{affine diagonal}, and $Z \subset X$ be an affine closed subspace, corresponding to some coherent sheaf of ideals $\sh{I} \subset \mc{O}_X$. Then the powers $\sh{I}^{i+1}$ define infinitesimal neighborhoods $Z^{(i)} \subset X$, which form a direct system
of closed subspaces. One may regard the ind-scheme as a formal neighborhood
of $Z\subset X$.

According to \cite[Tag 05YU]{stacks} the infinitesimal neighborhoods are affine as well, and the direct system $Z^{(i)}$ corresponds to an inverse system of noetherian rings $A_i$ with surjective transition maps.
Form the inverse limit $A=\invlim A_i$ and write $U=\Spec(A)$ for its spectrum. The induced projections $A \to A_i$ are surjective, so the canonical morphisms  $Z^{(i)} \to U$ are closed embeddings.

\begin{proposition} \label{prop:flatnhbd} In the above situation, there is a unique morphism $f:U \to X$ making the diagrams
$$
\begin{tikzcd} 
						& U\ar[d,"f"]\\	 
Z^{(i)}\ar[r]\ar[ur]	& X
\end{tikzcd}
$$
commutative. Moreover, the morphism $f$ is a flat neighborhood of $Z$, and the set-theoretic image $f(U)\subset X$ consists of all points $\zeta\in X$ whose closure $\overline{\{\zeta\}}$ intersects $Z$.  
\end{proposition}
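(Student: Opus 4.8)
The plan is to verify the three assertions — existence/uniqueness of $f$, flatness and the neighborhood property, and the description of the image — in that order, reducing everything possible to the affine situation where $X$ is itself the spectrum of a noetherian ring. First I would observe that since $X$ has affine diagonal and $Z \subset X$ is affine, each infinitesimal neighborhood $Z^{(i)}$ is affine by \cite[Tag 05YU]{stacks}, say $Z^{(i)} = \Spec(A_i)$, with the $A_i$ noetherian and the transition maps $A_{i+1} \to A_i$ surjective; setting $A = \invlim A_i$ and $U = \Spec(A)$, the projections $A \to A_i$ are surjective so each $Z^{(i)} \to U$ is a closed immersion. For \textbf{existence and uniqueness of $f$}: the compatible system of morphisms $Z^{(i)} \to X$ is the same as a morphism from the formal completion $\mathfrak{X}_{/Z}$ to $X$, and I want to factor it through $U$. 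Uniqueness is the easier half: two morphisms $U \to X$ agreeing on all $Z^{(i)}$ agree on the closed subscheme $\bigcup Z^{(i)}$, and since $A = \invlim A_i$ the ideal $\bigcap_i \Ker(A \to A_i)$ vanishes (the kernel of $A \to A_i$ is $\widehat{\sh{I}}^{\,i+1}$ in the completed ring, and these intersect to zero), so the closed subscheme they cut out is scheme-theoretically dense; combined with separatedness of $X$ this forces the two morphisms to coincide. For existence, one builds $f$ étale-locally: choose an étale affine cover $\Spec R \to X$ through which $Z$ (being affine) lifts compatibly, obtain the map $A \to \invlim(R/I^{i+1}) = \widehat{R}$ and then $R \to \widehat{R}$ gives a map $\Spec \widehat{R} \to \Spec R \to X$; the point is that these glue, which follows from the uniqueness just established applied over the fiber products of the cover. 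This is essentially the algebraization/glueing argument for formal neighborhoods, and I expect the bookkeeping to glue the local $f$'s — tracking that the descent datum is a morphism of formal schemes and invoking effectivity of descent for the affine morphism $U \to X$ — to be the \textbf{main obstacle}, though it is of a routine flavor given the noetherian and affine-diagonal hypotheses.

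For the \textbf{flatness and neighborhood} claim, I would appeal directly to the criterion recalled in Remark \ref{rem:flatnotherian}: since $X$ is locally noetherian, it suffices to check that $U \to X$ is flat and that $Z^{(i)} \times_X U \to Z^{(i)}$ is an isomorphism for every $i$. The latter is immediate from the construction: base-changing $U = \Spec A$ along the closed immersion $Z^{(i)} \hookrightarrow X$ gives $\Spec(A \otimes_{\sh{O}_X} \sh{O}_{Z^{(i)}})$, and étale-locally this is $\Spec(\widehat{R} \otimes_R R/I^{i+1}) = \Spec(R/I^{i+1})$ because $\widehat{R}/I^{i+1}\widehat{R} = R/I^{i+1}$ for the $I$-adic completion of a noetherian ring; so the projection is an isomorphism. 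Flatness is the classical fact that the $I$-adic completion $R \to \widehat{R}$ of a noetherian ring is flat (\cite[Tag 00MB]{stacks}); étale-locally $U \to X$ is exactly such a completion map, and flatness is étale-local on source and target, so $U \to X$ is flat. By Remark \ref{rem:flatnotherian} this shows $f$ is a flat neighborhood of $Z$.

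Finally, for the \textbf{image} statement, I want to show $f(U) = \{\zeta \in X : \overline{\{\zeta\}} \cap Z \neq \varnothing\}$. Working étale-locally reduces this to: the image of $\Spec \widehat{R} \to \Spec R$ (with $\widehat{R}$ the $I$-adic completion) consists exactly of the primes $\mathfrak{p}$ of $R$ with $\mathfrak{p} \subseteq$ some prime containing $I$, equivalently $\overline{\{\mathfrak{p}\}} \cap V(I) \neq \varnothing$. For the inclusion $\subseteq$: any prime of $\widehat{R}$ lies over a prime $\mathfrak{p}$ of $R$, and $\widehat{R}$ is $I\widehat{R}$-adically complete hence local-ish along $V(I\widehat{R})$ — more precisely every prime of $\widehat{R}$ specializes into $V(I\widehat{R})$ since $I\widehat{R}$ is contained in the Jacobson radical of $\widehat{R}$ — so pushing that specialization forward, $\overline{\{\mathfrak{p}\}}$ meets $V(I)$. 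For $\supseteq$: if $\mathfrak{p} \subseteq \mathfrak{q}$ with $\mathfrak{q} \supseteq I$, then $\mathfrak{q}$ is in the image (as $\widehat{R} \to R/\mathfrak{q} = \widehat{R/\mathfrak{q}}$... one checks $V(I) \subseteq \mathrm{im}$ directly since $\widehat R \otimes_R \kappa(\mathfrak q)\neq 0$), and then flatness of $\widehat{R}$ over $R$ gives going-down, so the generization $\mathfrak{p}$ is also in the image. Assembling the local pictures, and using that $f$ is a flat, hence generizing, morphism with closed-point behavior controlled by the $Z^{(i)}$, yields the stated image. I expect the étale-local-to-global passage here to again be the only real subtlety, handled the same way as in the existence step.
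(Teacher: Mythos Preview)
Your plan is sound and substantially more self-contained than the paper's own proof, which simply cites \cite[Thm.~32]{SidRes} for existence of the flat neighborhood and \cite[Cor.~1.5]{hall2014coherent} for uniqueness, and then handles the image description by a short topological argument directly on $X$ without any \'etale-local reduction.

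Two comments on the differences. First, your existence sketch is correct in spirit but the phrase ``through which $Z$ (being affine) lifts compatibly'' is misleading: there is no reason for the closed immersion $Z \hookrightarrow X$ to factor through the \'etale cover $\Spec R \to X$, and affineness of $Z$ does not help produce such a section. What you actually have is the \'etale map $A_i \to R/I^{i+1}$ obtained by base-changing the cover along $Z^{(i)} \to X$, and hence a map $A \to \widehat R$; by the topological invariance of the \'etale site over the $J$-adically complete ring $A$, this is itself an \'etale cover of $U=\Spec A$. The morphism $\Spec \widehat R \to \Spec R \to X$ then descends along this cover to give $f \colon U \to X$, the descent condition being exactly your uniqueness statement applied over $\Spec(\widehat R \otimes_A \widehat R)$. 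Your outline already says ``these glue, which follows from the uniqueness'', so you have the right mechanism; the write-up just obscures which cover the descent is taken along.

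Second, your image argument via the Jacobson radical of $\widehat R$ and going-down for flat maps is correct, but the paper's argument is worth recording for its economy. For the inclusion $f(U) \supseteq \{\zeta : \overline{\{\zeta\}} \cap Z \neq \varnothing\}$ you both use that flat maps are generizing together with $Z \subset f(U)$. For the reverse inclusion the paper, rather than reducing to completions, observes that if $\overline{\{\zeta\}} \cap Z = \varnothing$ then $W = X \smallsetminus \overline{\{\zeta\}}$ is an open neighborhood of $Z$ containing every $Z^{(i)}$; re-running the existence statement with $W$ in place of $X$ yields $f' \colon U \to W$ compatible with all $Z^{(i)} \to W$, and uniqueness then forces $f$ to factor as $U \to W \hookrightarrow X$, so $\zeta \notin f(U)$. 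This uses existence and uniqueness as a black box and needs no local computation at all.
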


\begin{proof} The existence of a map $f: U \to X$ forming a flat neighborhood of $Z$ is proven in \cite[Thm. 34]{SidRes} and uniqueness follows from \cite[Cor. 1.5]{hall2014coherent}.

Note that $Z$ lies in the image of $f$ and so if the closure of a point $\zeta \in X$ intersects $Z$, $\zeta$ is a generization of a point in $f(U)$. Since the image of a flat morphism is stable under generization $f(U)$ must contain all such $\zeta$. On the other hand, if the closure of a point $\zeta \in X$ misses $Z$ then there is an open neighborhood $W \subset X$ of $Z$ that is disjoint from $\overline{\{\zeta\}}$. However, the argument showing the existence of $f: U \to X$ shows that it factors through $W \subset X$ since each $Z^{(i)}$ does as well. This shows $f(U)$ consists of those points $\zeta \in X$ whose closure intersects $Z$. \end{proof}

\medskip
Now suppose that $\sh{I}' \subset \sh{I}$ is another sheaf of ideals with the same radical.
This gives another inverse limit $A'=\invlim A'_i$, together with a flat neighborhood $U'=\Spec(A') \to X$ of $Z'=V(\sh{I}') \subset X$. 
Moreover, we have a homomorphism $A' \to A$ of rings and a map $g:U \to U'$ which is a morphism of $X$-schemes by the uniqueness in Proposition \ref{prop:flatnhbd}. 

\begin{proposition}
The natural morphism $g: U \to U'$ is an isomorphism. 
\end{proposition}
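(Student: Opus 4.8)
The plan is to show that $g\colon U\to U'$ is an isomorphism by a formal-local argument: since both $U$ and $U'$ are affine, it suffices to prove the ring map $A'\to A$ is bijective. The key observation is that a flat neighborhood is uniquely determined by the infinitesimal thickenings it trivializes, and $Z$ and $Z'$ have the same radical, hence the same underlying closed set and "the same" formal completion up to cofinal reindexing of thickenings.

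First I would reduce to comparing the two inverse systems of thickenings. Because $\sh I'\subset\sh I$ have the same radical and $X$ is noetherian, for every $i$ there is some $j$ with $\sh I^{\,j}\subset\sh I'^{\,i+1}$, and conversely $\sh I'^{\,i+1}\subset\sh I^{\,i+1}$ is automatic from $\sh I'\subset\sh I$. Thus the closed subspaces $Z^{(j)}$ and $Z'^{(i)}$ are mutually cofinal: each $Z'^{(i)}$ is dominated by some $Z^{(j)}$ and each $Z^{(i)}$ is dominated by $Z'^{(i)}$ (after swapping which ideal is smaller on that step). Passing to rings this says the pro-systems $\{A_i\}$ and $\{A'_i\}$ are pro-isomorphic, so the limits $A=\varprojlim A_i$ and $A'=\varprojlim A'_i$ are canonically isomorphic; moreover this isomorphism is compatible with the structure maps to $X$, because on each finite thickening the maps to $X$ agree by construction. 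Hence $g$ is an isomorphism of $X$-schemes.

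Alternatively — and this is probably the cleanest writeup — I would invoke the uniqueness half of Proposition \ref{prop:flatnhbd} directly. Both $U$ and $U'$ are flat neighborhoods, and a flat neighborhood of $Z$ is the same thing as a flat neighborhood of $Z_{\mathrm{red}}=Z'_{\mathrm{red}}$, since whether the first projection $T\times_X U\to T$ is an isomorphism for a test scheme $T$ with $T\times_X(X\smallsetminus C)=\varnothing$ depends only on the closed set $C=|Z|=|Z'|$, not on the scheme structure. So $U\to X$ is also a flat neighborhood of $Z'$, and $U'\to X$ is the flat neighborhood of $Z'$; by the uniqueness statement in Proposition \ref{prop:flatnhbd} (which rests on \cite[Cor. 1.5]{hall2014coherent}), the $X$-morphism $g\colon U\to U'$ must be an isomorphism, with inverse the analogous comparison map $U'\to U$. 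One should check the two comparison maps are mutually inverse, which again follows from the uniqueness clause applied to $U\to X$ and $U'\to X$ as flat neighborhoods of the common closed subset.

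The main obstacle is making precise that "flat neighborhood of $Z$" only depends on $|Z|$: one must unwind Definition \ref{def:flatneighborhood} and note that the condition $T\times_X(X\smallsetminus C)=\varnothing$ is purely topological in $C$, so replacing $\sh I$ by $\sh I'$ with the same radical changes neither the class of admissible test morphisms $T\to X$ nor the conclusion required of them. Once that is granted, everything else is a formal consequence of the uniqueness in Proposition \ref{prop:flatnhbd}; the cofinality-of-thickenings argument in the first approach is the concrete incarnation of the same fact and can be used as a backup if one prefers not to route through the representability/uniqueness machinery of \cite{hall2014coherent}.
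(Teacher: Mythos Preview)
Your first approach --- cofinality of the two inverse systems of thickenings --- is correct and is exactly the paper's argument. The paper makes the reduction slightly more explicit: from $\sh{I}^n \subset \sh{I}' \subset \sh{I}$ and $\sh{I}'^m \subset \sh{I}^n$ it manufactures comparison maps $U \to U' \to U_n$ and $U' \to U_n \to U'_m$ (where $U_n$, $U'_m$ are the affine flat neighborhoods built from $\sh{I}^n$, $\sh{I}'^m$), reduces by symmetry to the case $\sh{I}' = \sh{I}^n$, and then invokes the elementary fact that $\varprojlim_k B/I^k \to \varprojlim_k B/I^{nk}$ is bijective. Your phrasing via pro-isomorphism of $\{A_i\}$ and $\{A'_i\}$ packages the same content.

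Your second approach, however, does not go through as written. The uniqueness clause in Proposition~\ref{prop:flatnhbd} asserts only that for the \emph{given} affine scheme $U=\Spec(\varprojlim A_i)$, the morphism $U\to X$ compatible with the closed embeddings $Z^{(i)}\hookrightarrow X$ is unique; it says nothing about uniqueness of flat neighborhoods of a fixed closed set up to $X$-isomorphism. Indeed, flat neighborhoods are \emph{not} unique in that sense: the identity $X\to X$ is a flat neighborhood of any closed subspace, yet is typically not isomorphic to $U$. So the inference ``$U\to X$ is a flat neighborhood of $Z'$, hence by uniqueness $g$ is an isomorphism'' appeals to a statement that has not been established. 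To repair this route you would need to (i) construct the putative inverse $U'\to U$, which already requires the cofinality observation (there is no ring map $A\to A'$ coming directly from $\sh{I}'\subset\sh{I}$), and (ii) invoke a uniqueness statement for $X$-morphisms \emph{into} $U$ rather than into $X$ to verify the two composites are identities. At that point you have essentially rewritten the first argument with extra overhead; better to present the cofinality proof on its own.
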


\begin{proof} There are positive integers $n$ and $m$ with $\sh{I}^n \subset \sh{I'} \subset \sh{I}$ and $\sh{I'}^m \subset \sh{I}^n \subset \sh{I}'$. Thus, if $U_n$ and $U'_m$ denote the flat neighborhoods of $Z^{(n)}$ and $Z'^{(m)}$, we obtain maps
\[U \to U' \to U_n\]
\[U' \to U_n \to U_m'\]
and if we show both compositions are isomorphisms, it will follow that $U \to U'$ is an isomorphism. Thus, it suffices to assume that $\sh{I}'=\sh{I}^n$ by symmetry. Now, the result follows from the fact that the natural map $\invlim_n (B/I^{n}) \to \invlim_m (B/I^{nm})$ is an isomorphism for any commutative ring $B$ and any ideal $I \subset B$.\end{proof}

\medskip
It follows that the flat neighborhood $f:U=\Spec A \to X$ only depends
on the closed set $|Z| \subset |X|$, or equivalently on the complementary open subspace $X \smallsetminus Z$,
and not on the chosen scheme structure of $Z$. To conclude, we show that there are plenty of open neighborhoods $U \subset X$ which appear in a flat Mayer--Vietoris square with affine schemes. 

\begin{corollary} \label{cor:mvsquaresexist} Let $X$ be a noetherian algebraic space with affine diagonal, if $V=\Spec A \subset X$ is an affine open subset such that the reduced closed subset $C=(X \smallsetminus V)_{\mathrm{red}}=\Spec B \subset X$ is an affine scheme, then $X$ appears in a flat Mayer--Vietoris square with affine schemes:
\[\begin{tikzcd}
  C \times_X V \arrow[d] \arrow[r] & U=\Spec B' \arrow[d, "f"]  \\
  V=\Spec A \arrow[r, "i"] &  X
  \end{tikzcd}
\]
\end{corollary}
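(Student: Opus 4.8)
The plan is to obtain $U$ directly from Proposition \ref{prop:flatnhbd} and then to verify two bookkeeping points: that the square one gets is a flat Mayer--Vietoris square in the sense recalled above, and that its upper-left corner is once again an affine scheme. No new idea beyond the machinery already developed in this section is needed; the work is entirely in matching up the hypotheses.

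First I would apply Proposition \ref{prop:flatnhbd} to the affine closed subscheme $Z = C = (X \smallsetminus V)_{\mathrm{red}} = \Spec B$ of the noetherian algebraic space $X$. This produces a flat neighborhood $f : U = \Spec B' \to X$ of $C$, where $B' = \invlim B_i$ for the coordinate rings $B_i$ of the infinitesimal neighborhoods $C^{(i)} \subset X$; since $B_0 = B$, this $U$ is exactly the scheme named in the statement, and by the discussion following Proposition \ref{prop:flatnhbd} it depends only on the closed set $|X \smallsetminus V|$, so nothing is lost by having equipped $C$ with the reduced structure. Next I would assemble the square: the inclusion $i : V = \Spec A \hookrightarrow X$ is an affine, hence quasi-compact, open immersion whose set-theoretic complement equipped with the reduced structure is precisely $C$, and $f : U \to X$ is a flat neighborhood of $C$; by the definition of a flat Mayer--Vietoris square recalled above, the cartesian square displayed in the statement, with upper-left corner the fibre product $U \times_X V$, is therefore such a square.

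The one step that is not purely formal — and also the step I expect to be the only real obstacle, since it is exactly where the affine-diagonal hypothesis on $X$ is used — is to check that $U \times_X V$ is an affine scheme rather than merely an open subspace of the affine scheme $U$ (open subspaces of affines need not be affine). For this I would note that the natural morphism $U \times_X V \to U \times_{\Spec \mathbf{Z}} V = \Spec(B' \otimes_{\mathbf{Z}} A)$ is a base change of the diagonal $X \to X \times_{\Spec \mathbf{Z}} X$, hence affine; and an affine morphism with affine target has affine, in particular schematic, source. Consequently $V$, $U$, and $U \times_X V$ are all affine schemes, which is exactly the content of the corollary.
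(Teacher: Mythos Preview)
Your proof is correct and follows essentially the same route as the paper's: apply Proposition \ref{prop:flatnhbd} to the affine closed subscheme $C$ to obtain an affine flat neighborhood $U\to X$, then form the fibre product with the open immersion $V\hookrightarrow X$ to obtain the flat Mayer--Vietoris square. In fact you are more thorough than the paper, which omits the verification that the upper-left corner $U\times_X V$ is affine; your argument via the affine diagonal (base change of $\Delta_X$ to $U\times V$) is exactly the right way to see this and explains why that hypothesis is present.
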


\begin{proof} Proposition \ref{prop:flatnhbd} applied to the closed immersion $C \to X$ yields an affine flat neighborhood $U \to X$ of $C$. Thus, by taking the fiber product of $U$ and $V$ over $X$ yields a flat Mayer--Vietoris with affine schemes, as desired. \end{proof}

We conclude this section by using flat neighborhoods to improve Corollary \ref{cor:codim1}. 

\begin{proposition} \label{prop:affineMV} Let $X$ denote a separated noetherian algebraic space whose normalization remains noetherian and suppose that $z_1,...,z_n \in X$ all have codimension $\leq 2$ and that $x_1,...,x_r$ all admit a common affine open neighborhood. Then there is an open neighborhood $U \subset X$ of the $z_1,...,z_n,x_1,...,x_r$ that appears in a flat Mayer--Vietoris square with affine schemes. \end{proposition}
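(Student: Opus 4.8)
The overall strategy follows the introduction: I would produce the desired $U$ by two applications of Corollary \ref{cor:codim1}, glued together by Lemma \ref{open set}, and then invoke Corollary \ref{cor:mvsquaresexist}. First, enlarge the list $z_1,\dots,z_n$ so that it contains the finitely many generic points of $X$ (these have codimension $0$), and split it so that $z_1,\dots,z_m$ have codimension $\leq 1$ and $z_{m+1},\dots,z_n$ have codimension exactly $2$. By Corollary \ref{cor:codim1} there is an affine open subscheme $V=\Spec A\subset X$ containing $x_1,\dots,x_r,z_1,\dots,z_m$; since $V$ meets every irreducible component, it is dense, so $C:=(X\smallsetminus V)_{\mathrm{red}}$ is a reduced closed subspace that is \emph{nowhere dense} in $X$. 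Discarding any $z_j$ ($j>m$) that already lies in $V$, we may assume $z_{m+1},\dots,z_n\in C$; note that $C$ then contains none of the $x_i$ and none of the $z_k$ with $k\leq m$.

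The key observation is that each $z_j$ ($j>m$) has codimension $\leq 1$ \emph{inside} $C$. Indeed, any chain $\overline{\{z_j\}}=W_0\subsetneq\cdots\subsetneq W_\ell$ of irreducible closed subsets of $C$ is in particular such a chain in $X$, and its top term $W_\ell$, being contained in the nowhere dense set $C$, is properly contained in some irreducible component $Y$ of $X$; thus $\overline{\{z_j\}}\subsetneq\cdots\subsetneq W_\ell\subsetneq Y$ has length $\ell+1\leq\operatorname{codim}(\overline{\{z_j\}},X)=2$, whence $\ell\leq 1$. Since $C$ is again a separated noetherian algebraic space (with noetherian normalization, which is automatic when $X$ is excellent, the case of interest), a second application of Corollary \ref{cor:codim1} — now to $C$, with the codimension $\leq 1$ points $z_{m+1},\dots,z_n$ and no auxiliary points — produces an affine open subscheme $W=\Spec B\subset C$ containing all of them.

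Finally, since $W$ is open in the closed set $C=X\smallsetminus V$, Lemma \ref{open set} shows that $U:=V\cup W$ is open in $X$, and by construction it contains all of $x_1,\dots,x_r,z_1,\dots,z_n$. As $W\cap V=\varnothing$, we have $C\cap U=W$, and since $C$ is reduced this identifies $(U\smallsetminus V)_{\mathrm{red}}$ with the affine scheme $W=\Spec B$, while $V=\Spec A\subset U$ is an affine open subset. Being open in the separated space $X$, the algebraic space $U$ is noetherian with affine diagonal, so Corollary \ref{cor:mvsquaresexist} applies to $U$, with $V$ and $W$ playing the roles of the affine open and its reduced complement: $U$ sits in a flat Mayer--Vietoris square with affine schemes, the flat neighborhood being the affine algebraization $\Spec B'\to U$ of the formal neighborhood of $W$ furnished by Proposition \ref{prop:flatnhbd}. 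This $U$ is the required neighborhood.

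The step I expect to be the crux is the codimension drop for the $z_j$ inside $C$: it works precisely because $C$ is nowhere dense, which is the reason for inserting the generic points of $X$ at the outset and for pushing \emph{all} of the low-codimension points into $V$, so that $C$ sees only the codimension-two points and a second application of Corollary \ref{cor:codim1} becomes available. The rest is bookkeeping with the gluing lemma and the constructions of Section \ref{Flat neighborhoods}.
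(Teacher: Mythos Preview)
Your proof is correct and follows essentially the same route as the paper's: enlarge the $z_i$ by the generic points, use Corollary \ref{cor:codim1} once to obtain a dense affine open $\Spec A$ absorbing all codimension $\leq 1$ points and the $x_i$, observe that the remaining codimension-two points drop to codimension $\leq 1$ on the reduced complement, apply Corollary \ref{cor:codim1} a second time there to get $\Spec B$, and invoke Corollary \ref{cor:mvsquaresexist} on the union. Your write-up is in fact a bit more explicit than the paper's (you spell out the chain argument for the codimension drop, cite Lemma \ref{open set} for openness of $V\cup W$, and flag the noetherian-normalization hypothesis needed to re-apply Corollary \ref{cor:codim1} to $C$), but there is no substantive difference in method.
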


\begin{proof} We will assume that the set $\{z_1,...,z_n\}$ contains all the generic points of $X$. Suppose $z_1,...,z_m \in \{z_1,...,z_n\}$ denotes the points with codimension $\leq 1$. Then, by Corollary \ref{cor:codim1} there is an affine open subscheme $\Spec A \subset X$ containing the $z_1,...,z_m,x_1,..,x_r$. If $D$ denotes $X \smallsetminus \Spec A$ with the reduced structure, the points $z_{m+1},...,z_n$ which are not already in $\Spec A$ now have strictly local rings $\mathcal{O}^{\text{sh}}_{D,z_i}$ of dimension $\leq 1$ because $\Spec A \subset X$ is dense. By Proposition $\ref{cor:codim1}$ again, there is an affine open $\Spec B \subset D$ containing the $z_{m+1},...,z_n$ which are not already in $\Spec A$. Consider the open set $W=\Spec B \cup \Spec A \subset X$ and note that it satisfies $|W| \cap |D|=|\Spec B|$. Then the open neighborhood $W$ appears in a flat Mayer--Vietoris square with affine schemes. \end{proof}

\begin{remark} \label{rem:strats} The reason the above argument works is that codimension $\leq 1$ points admit affine neighborhoods on algebraic spaces. So even if a codimension $2$ point of $X$ isn't scheme-like, it becomes so after viewing the point on the complement of the dense affine open neighborhood $\Spec A \subset X$. The same argument above would work for non-schematic points $x \in X$ if we could find a desirable dense open $\Spec A \subset X$ so that $x \in X \smallsetminus A$ admits an affine open neighborhood. 

We do not know if this holds for algebraic spaces in general, however there is one interesting case where we can say something: separated and finite type algebraic spaces over $\widebar{\mathbb{F}}_p$ of dimension three. Indeed, if $X$ is such an algebraic space and $\Spec A \subset X$ is any dense open subset, then the reduced complement $Y$ is a scheme because the normalization $\tilde{Y} \to Y$ yields a quasi-projective scheme $\tilde{Y}$ by \cite[Cor. 2.11]{MR146182} and by \cite[Thm. 3.3]{SurfaceRes} $Y$ is an AF-scheme.

In general all we can say is that $X$ admits a stratification of open immersions:
\[\varnothing=U_{0} \subset U_1 \subset \dots \subset U_n=X\]
where each $U_{i+1} \smallsetminus U_{i}$ is an affine scheme. This follows by repeatedly applying Corollary \ref{cor:codim1}. \end{remark} 

This motivates the following question

\begin{Question} Let $x$ denote a point on a separated and noetherian algebraic space $X$ which is not scheme-like. Does there exist a dense affine open set $U \subset X$ such that $x \in X \smallsetminus U$ admits an affine open neighborhood? More precisely, what is the least integer $p$ so that there is a stratification as in Remark \ref{rem:strats} where $x \in U_p$? \end{Question}

A positive answer would help us to extend Theorem \ref{resolutionthree} to points which are not scheme-like.
\section{Triples}
\label{Triples}

In this section we study a very concrete and simple situation which lies entirely in the realm of commutative algebra. Suppose that we have a diagram of rings
\begin{equation}
\label{diagram rings}
\begin{CD}
R_{12}	@<<<	R_2\\
@AAA\\
R_1.
\end{CD}
\end{equation}
Write $\Mod(R_i)$ for the abelian categories of $R_i$-modules, and consider the resulting  2-fiber product  
$$
\catA=\Mod(R_1)\times_{\Mod(R_{12})} \Mod(R_2).
$$
The objects  in this category are \emph{triples} $(M_1,M_2,\varphi)$ where $M_1$ is an $R_1$-module, $M_2$ is an $R_2$-module,
and $\varphi:M_1\otimes R_{12}\ra M_2\otimes R_{12}$ is an isomorphism.
The homomorphisms $(M_1,M_2,\varphi)\ra (N_1,N_2,\psi)$ are pairs $(f_1,f_2)$ of homomorphisms 
$f_i:M_i\ra N_i$ such that $\psi\circ (f_1\otimes\id) = (f_2\otimes\id)\circ\varphi$.
Note that $(f_1,f_2)$ is an isomorphism in $\catA$ if and only if the maps $f_1$ and $f_2$ are bijective. 

Also note  that one may complete the diagram \eqref{diagram rings} with the ring $R=R_1\times_{R_{12}} R_2$ to a cartesian square,
and study the resulting functor 
$$
\text{Mod}(R) \lra \sh{A},\quad M\longmapsto (M\otimes_R R_1, M\otimes_R R_2,\can).
$$
This was done, under suitable assumptions,  by Milnor \cite[Sec. 2]{MR0349811} or Ferrand \cite[Sec. 5]{MR2044495}. However, we shall see below that often one has to replace the fiber product ring $R$ by a suitable scheme or algebraic space $X$ in order to uncover the hidden geometry of the situation. For the time being, we make a purely algebraic study of the diagram \eqref{diagram rings}. We start with the following observation:

\begin{proposition}
\label{abelian category}
The category $\sh{A}$ is additive. It is an abelian category provided the 
 homomorphisms $R_1\ra R_{12}$ and $R_2\ra R_{12}$ are flat.  
\end{proposition}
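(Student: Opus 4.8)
The plan is to verify the two assertions separately. For additivity, I would note that $\catA$ is by construction a $2$-fiber product of additive categories along additive functors, so it inherits a zero object (the triple $(0,0,\mathrm{id})$), and biproducts are formed componentwise: given $(M_1,M_2,\varphi)$ and $(N_1,N_2,\psi)$, the triple $(M_1\oplus N_1, M_2\oplus N_2, \varphi\oplus\psi)$ together with the evident componentwise inclusions and projections is readily checked to be a biproduct in $\catA$. The $\mathrm{Hom}$-sets are subgroups of $\mathrm{Hom}_{R_1}(M_1,N_1)\times\mathrm{Hom}_{R_2}(M_2,N_2)$ and composition is bilinear, so $\catA$ is preadditive; combined with the existence of finite biproducts this gives additivity. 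None of this uses flatness.

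For the abelian statement, the flatness of $R_i\to R_{12}$ enters precisely to make kernels and cokernels exist and behave well. Given a morphism $(f_1,f_2):(M_1,M_2,\varphi)\to(N_1,N_2,\psi)$, I would define its kernel to be $(\Kernel f_1, \Kernel f_2, \varphi')$, where $\varphi'$ is the isomorphism $\Kernel(f_1)\otimes_{R_1}R_{12}\to\Kernel(f_2)\otimes_{R_2}R_{12}$ obtained by applying $-\otimes R_{12}$ to the exact sequences $0\to\Kernel f_i\to M_i\to N_i$: flatness of $R_i\to R_{12}$ makes $\Kernel(f_i)\otimes R_{12}=\Kernel(f_i\otimes\id)$, and then $\varphi$ restricts to the desired isomorphism on kernels because it intertwines $f_1\otimes\id$ and $f_2\otimes\id$. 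Dually, the cokernel is $(\operatorname{Coker} f_1, \operatorname{Coker} f_2, \varphi'')$; here cokernels commute with $-\otimes R_{12}$ with no hypothesis (right exactness), but one still needs flatness to know that the induced map on cokernels after tensoring is the cokernel of $\varphi$-conjugated maps — again this is immediate once kernels are under control, or one argues directly. I would then check the universal properties of these constructions in $\catA$, which reduce to the (already known) universal properties in $\Mod(R_1)$ and $\Mod(R_2)$ together with a compatibility of the gluing isomorphisms that is forced by naturality.

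The remaining axiom is that the canonical map $\operatorname{Coim}\to\operatorname{Im}$ is an isomorphism. Since for a morphism $(f_1,f_2)$ the image is $(\operatorname{Im} f_1,\operatorname{Im} f_2,\bar\varphi)$ and the coimage is $(\operatorname{Coim} f_1,\operatorname{Coim} f_2,\bar\varphi)$ — both computed componentwise by the descriptions of kernel and cokernel above — the canonical morphism in $\catA$ has components the canonical morphisms $\operatorname{Coim} f_i\to\operatorname{Im} f_i$ in $\Mod(R_i)$, which are isomorphisms because $\Mod(R_i)$ is abelian. A morphism in $\catA$ is an isomorphism iff both components are, so we are done. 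I would also remark that flatness is genuinely needed: without it, tensoring a kernel need not compute the kernel of the tensored map, so the gluing datum $\varphi'$ on $\Kernel f_1\otimes R_{12}$ need not be an isomorphism, and $\catA$ can fail to be abelian.

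The main obstacle is bookkeeping rather than depth: one must be careful that the gluing isomorphisms for the kernel, cokernel, image and coimage triples are \emph{the same} isomorphism (induced by $\varphi$ and $\psi$) and that all the componentwise maps are genuinely morphisms in $\catA$, i.e. compatible with these gluings. This is a diagram-chase using the naturality of $-\otimes R_{12}$ and the defining relation $\psi\circ(f_1\otimes\id)=(f_2\otimes\id)\circ\varphi$; flatness is invoked exactly once, to identify $\Kernel(f_i)\otimes R_{12}$ with $\Kernel(f_i\otimes\id)$, and everything else then follows formally.
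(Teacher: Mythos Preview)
Your proposal is correct and follows essentially the same route as the paper: componentwise zero object and biproducts for additivity, componentwise kernels and cokernels (with flatness used exactly to identify $\Kernel(f_i)\otimes R_{12}$ with $\Kernel(f_i\otimes\id)$), and the coimage-to-image axiom checked componentwise. One small clarification: the cokernel case needs no flatness at all---right exactness gives $C_i\otimes R_{12}=\operatorname{Coker}(f_i\otimes\id)$, and since $\varphi,\psi$ are isomorphisms intertwining $f_1\otimes\id$ and $f_2\otimes\id$, the induced map $\beta$ on cokernels is automatically an isomorphism (the paper invokes the Five Lemma here); your hedge ``or one argues directly'' is the right instinct.
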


\proof
Obviously, addition of the maps $f_i$ endows the Hom sets with a group structure for which composition is bilinear.
Moreover $(0,0,\id)$ is an initial object, and 
$$
(M_1,M_2,\varphi)\times (N_1,N_2,\psi) = (M_1\oplus N_1, M_2\oplus N_2, \varphi\oplus\psi)
$$
is a product. Thus $\sh{A}$ is an additive category (see \cite[Lem. 8.2.9]{MR2182076}).
Given a  morphism $f=(f_1,f_2):(M_1,M_2,\varphi)\ra (N_1,N_2,\psi)$ one sets $C_i=\text{Cokernel}(f_i)$
and writes $\beta:C_1\otimes R_{12}\ra C_2\otimes R_{12}$ for the induced map. The latter is bijective, by the Five Lemma,
and one can check that $(C_1,C_2,\beta)$ is a cokernel for $(f_1,f_2)$ in the category $\sh{A}$. 

To construct kernels we assume that  $R_i\ra R_{12}$ are flat. Set $K_i=\Kernel(f_i)$.
Then $K_i\otimes_{R_i}R_{12}$ is the kernel for $M_i\otimes_{R_i}R_{12}\ra N_i\otimes_{R_i}R_{12}$,
and we write $\alpha:K_1\otimes_{R_1}R_{12}\ra K_2\otimes_{R_2}R_{12}$ for the induced map.
Again one can check that $(K_1,K_2,\alpha)$ is a kernel for $(f_1,f_2)$. 

Lastly, we need to check that the natural map from the cokernel of $(K_1,K_2, \alpha) \to (M_1,M_2,\varphi)$
to the kernel of $(M_1,M_2,\varphi) \to (C_1,C_2, \beta)$ is an isomorphism. However, this is true because it holds on each component. \qed

\medskip
In our setting, objects of the form $(R_1^{\oplus n},R_2^{\oplus n}, A)$ for some $n\geq 0$ and some $A\in \GL_n(R_{12})$
take over the role of locally free sheaves of constant rank which are trivial over $R_1$ and $R_2$. In light of this, the following is relevant for the resolution property.

\begin{Theorem}
\label{resolution property from commutative algebra}
Suppose that $R_i\ra R_{12}$   are flat for $i=1,2$ and that the multiplication map $R_{12}\otimes_{R_2}R_{12}\ra R_{12}$
is bijective.
Then every object $(M_1,M_2,\varphi)$ with finitely generated $M_1$ and $M_2$ 
is the quotient of some $(R_1^{\oplus n}, R_2^{\oplus n}, A)$  for a suitable $n\geq 0$ and $A\in \GL_n(R_{12})$.
\end{Theorem}

\proof
We first check that $(M_1,M_2,\varphi)$ is the quotient of some $(R_1^{\oplus d},N_2,\psi)$ with some integer $d\geq 0$
and some finitely generated $N_2$.
Since $M_1$ is finitely generated, there is indeed a surjection $R_1^{\oplus d}\ra M_1$.
Write $h:R_{12}^{\oplus d}\ra M_2\otimes_{R_2} R_{12}$ for the composition of the induced map
$R_{12}^{\oplus d}\ra M_1\otimes_{R_1}R_{12}$ with $\varphi$.
The cartesian diagram
$$
\begin{CD}
R_{12}^{\oplus d} 	@>h >>	M_2\otimes_{R_1}R_{12}\\
@AAA				@AA\can A\\
F	@>>>	M_2
\end{CD}
$$
defines some $R_2$-module $F$. The lower horizontal arrow is surjective, because this holds for the upper horizontal arrow.
Tensoring this diagram with  $R_{12}$ yields another diagram
$$
\begin{CD}
R_{12}^{\oplus d}\otimes_{R_2}R_{12} 	@>>>	M_2\otimes_{R_1}R_{12}\otimes_{R_2}R_{12}\\
@AAA					@AA\can A\\
F\otimes_{R_2}R_{12}			@>>>	M_2\otimes_{R_2}R_{12},
\end{CD}
$$
which remains cartesian because $R_{12}$ is flat over $R_1$. The vertical arrow on the right is bijective,
because $R_{12}\otimes_{R_2}R_{12}\ra R_{12}$ is bijective.
Hence the vertical arrow to the left is bijective as well. In a similar way we get an identification
$$
R_{12}^{\oplus d}\otimes_{R_2}R_{12}= R_{12}^{\oplus d}\otimes_{R_{12}}R_{12}\otimes_{R_2}R_{12} =R_{12}^{\oplus d}.
$$
Let $F_\lambda\subset F$, $\lambda\in L$ be the family of finitely generated submodules.
For some sufficiently large index $\mu$, the composite map  $F_\mu\ra M_2$ is surjective and   
the inclusion $F_\mu\otimes_{R_2}R_{12}\subset R_{12}^{\oplus d}$ is  bijective. This can be seen as  an isomorphism $\psi:R_{12}^{\oplus d}\ra F_\mu\otimes_{R_2}R_{12}$. Setting $N_2=F_\lambda$ we obtain the desired object $(R_1^{\oplus d},N_2,\psi)$.

To see that $(M_1,M_2,\varphi)$ is the quotient of some $(R_1^{\oplus n}, R_2^{\oplus n},A)$ it thus suffices to treat the case  $M_1=R_1^{\oplus d}$.
Choose a surjection $f_2:R_2^{\oplus n}\ra M_2$ for some $n\geq 0$. Since $M_2\otimes_{R_2}R_{12}$ is free of rank $d$, the kernel $K$ for the induced surjection
$R_{12}^{\oplus n}\ra M_2\otimes_{R_2}R_{12}$ is locally free of rank $n-d$, and in particular $n\geq d$.
Hence $K\oplus R_{12}^{\oplus m}$ is free, for some integer $m\geq 0$. Replacing $f_2$ by the surjection $(f_2,0):R_2^{\oplus n}\oplus R_2^{\oplus m}\ra M_2$,
we thus may assume that $K$ is free. Choose an isomorphism $\alpha: R_{12}^{\oplus n-d}\ra K$.
The surjection $f_2 \otimes \id \colon R_{12}^{\oplus n}\ra M_2\otimes_{R_2}R_{12}$ admits a section $\sigma$, because
the range is free. This yields a commutative diagram
$$
\begin{CD}
R_{12}^{\oplus d}\oplus R_{12}^{\oplus n-d}	@>(\sigma\circ\varphi,\alpha)>>	 R_{12}^{\oplus n}\\
@V(\id,0)VV					@VVf_2\otimes\id V\\
R_{12}^{\oplus d}			@>>\varphi>	M_2\otimes_{R_2}R_{12}
\end{CD}
$$
with bijective horizontal arrows. Let $A\in\GL_n(R_{12})$ be the matrix corresponding to the upper horizontal arrow,
and let $f_1:R_1^{\oplus n}\ra R_1^{\oplus d}=M_1$ be the projection onto the first $d$ factors.
It follows that $(f_1,f_2)$ defines the desired surjection from  $(R_1^{\oplus n}, R_2^{\oplus n}, A)$   onto $(M_1,M_2,\varphi)$.
\qed
\\\\
The condition $R_{12}\otimes_{R_1}R_{12}=R_{12}$ in Theorem \ref{resolution property from commutative algebra} may seem strange at first glance but, as the following proposition shows, it is quite natural. 

\begin{proposition} \label{lem:openimmersion}
Let $\varphi:R\ra A$ be a homomorphism of rings. Then the following are equivalent:
\begin{enumerate}
\item There are elements $f_1,\ldots, f_r\in R$  such that the induced maps on localizations $R_{f_i} \ra A_{f_i}$  are bijective for $1\leq i\leq r$, and that $\sum_{i=1}^r Af_i = A$. 
\item The induced morphism $u: \Spec(A)\ra\Spec(R)$ is an open immersion.
\item
The $R$-algebra $A$ is flat, of finite presentation, and the multiplication map $A\otimes_R A \ra A$ is bijective.
\end{enumerate}
\end{proposition}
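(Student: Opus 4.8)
For $(1)\Rightarrow(2)$: the hypothesis is that $\Spec(A)$ is covered by the basic open sets $D(f_i)$ (the condition $\sum Af_i = A$ says exactly that the images of the $f_i$ generate the unit ideal in $A$, i.e. the $D(f_i)\subseteq \Spec(A)$ cover it), and on each piece the map $A_{f_i}\cong R_{f_i}$ is an isomorphism. So $\Spec(A)\to\Spec(R)$ restricts to an isomorphism onto the open subscheme $D(f_i)\subseteq\Spec(R)$ for each $i$; since being an open immersion is local on the source and target, and these local isomorphisms are compatible (they are all induced by the single ring map $\varphi$), one checks that $u$ is a monomorphism whose image is the open set $\bigcup D(f_i)\subseteq\Spec(R)$ and which induces isomorphisms on all local rings. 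I would spell this out by noting $u^{-1}(D(f_i)) = D(\varphi(f_i)) = \Spec(A_{f_i})$ and $u$ restricted there is the isomorphism $\Spec(A_{f_i})\xrightarrow{\sim}\Spec(R_{f_i}) = D(f_i)$; gluing these gives that $u$ is an isomorphism onto the open $\bigcup_i D(f_i)$.

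For $(2)\Rightarrow(3)$: an open immersion is flat and of finite presentation (it is even \'etale, and locally it is a localization, hence finitely presented as a morphism), so it only remains to see that $A\otimes_R A\to A$ is bijective. This is the standard fact that a monomorphism of schemes (or rings in the appropriate sense — here $u$ is a monomorphism in $\Sch$, equivalently $\varphi$ is an epimorphism of rings) has its diagonal an isomorphism: $\Spec(A)\to\Spec(A)\times_{\Spec(R)}\Spec(A)$ is an isomorphism precisely because $u$ is a monomorphism, and this diagonal corresponds to the multiplication map $A\otimes_R A\to A$. Alternatively, argue directly: localize the map $A\otimes_R A\to A$ and use that on each $D(f_i)$ we have $A_{f_i}\otimes_{R_{f_i}} A_{f_i}\cong R_{f_i}\otimes_{R_{f_i}} R_{f_i} = R_{f_i}\cong A_{f_i}$; since the $D(\varphi(f_i))$ cover $\Spec(A)$ and likewise $\Spec(A\otimes_R A)$ maps into their preimages, bijectivity is checked locally.

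For $(3)\Rightarrow(1)$: this is the content of \cite[Tag 01QO]{stacks} or the cited references — a flat finitely presented morphism whose diagonal is an isomorphism is an open immersion, and an open immersion of affine schemes into $\Spec(R)$ is covered by basic opens $D(f_i)$ of $\Spec(R)$ on which it is an isomorphism, with the $D(\varphi(f_i))$ covering $\Spec(A)$. Concretely: flat + finite presentation gives that $u$ is open, say with image $W\subseteq \Spec(R)$; the condition $A\otimes_R A\cong A$ makes $u$ a monomorphism, and a flat monomorphism of finite presentation is an open immersion (\cite[Tag 025G]{stacks}). Then cover the open image $W$ by basic opens $D(f_i)\subseteq \Spec(R)$, $f_i\in R$; on each, $u$ induces an isomorphism $A_{f_i}\cong R_{f_i}$, and since $u$ is surjective onto $W$ the elements $\varphi(f_i)$ generate the unit ideal of $A$, i.e. $\sum Af_i = A$.

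\textbf{The main obstacle} I anticipate is not any single implication — each is a fairly standard piece of scheme theory — but rather being careful about which "finiteness" is needed where: $(1)$ and $(2)$ do not mention finite presentation, yet $(3)$ does, so the equivalence is really using that an open immersion is automatically of finite presentation and that finitely many $f_i$ suffice to cover a quasi-compact open. I would make sure to invoke quasi-compactness of $\Spec(R)$ to pass from an arbitrary open cover of the image to a finite one, which is what licenses the "finitely many $f_1,\dots,f_r$" in $(1)$. The cleanest write-up is probably to prove $(2)\Leftrightarrow(3)$ via the monomorphism/diagonal characterization and $(1)\Leftrightarrow(2)$ by the explicit basic-open description, citing \cite{stacks} for the standard facts rather than reproving them.
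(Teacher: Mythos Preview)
Your proposal is correct and follows the same overall shape as the paper: the implications $(1)\Rightarrow(2)\Rightarrow(3)$ are dismissed as routine, and the substance lies in getting from $(3)$ back to $(2)$. The only real difference is in the tool invoked for that step. You argue that $A\otimes_R A\cong A$ makes $u$ a monomorphism, and then cite the characterization ``flat monomorphism locally of finite presentation $=$ open immersion'' (\cite[Tag 025G]{stacks}). The paper instead observes that the base change of $u$ along itself is the projection $\Spec(A\otimes_R A)\to\Spec(A)$, which is an isomorphism by hypothesis, so $u$ becomes an isomorphism after an fppf base change and is therefore an open immersion by fppf descent (\cite[Tag 02L3]{stacks}). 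Both are one-line citations of standard facts, and the descent argument implicitly uses openness of the image just as yours does; neither approach has a real advantage over the other.
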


\proof The equivalence $(1) \Leftrightarrow (2)$ is \cite[Prop. 2]{MR286805}. The statement $(2) \Rightarrow (3)$ is straightforward so we omit it. We will show that $(3) \Rightarrow (2)$, i.e. that if $u$ is a fppf morphism such that $A \otimes_R A \to A$ is a bijection, then $u$ is an open immersion. Since the multiplication map is bijective, it induces a section of the map $\text{pr}_1: \Spec (A \otimes_R A) \to \Spec (A)$ which is an isomorphism. Since the set-theoretic image $u(\Spec A)=U$ is open in $\Spec(R)$ (see \cite[Tag 01UA]{stacks}), it suffices to show that $u|_{u^{-1}(U)}$ is an open immersion but this is an open immersion fppf-locally and the result now follows from \cite[Tag 02L3]{stacks}.
\qed

\begin{corollary} \label{MVresolution} Let $X$ denote a quasi-compact and quasi-separated algebraic space which appears in a flat Mayer--Vietoris square of the following form:
\[\begin{tikzcd}
  \Spec D \arrow[d] \arrow[r] & \hat{Z}=\Spec \hat{C} \arrow[d, "f"]  \\
  U=\Spec A \arrow[r, "i"] &  X
  \end{tikzcd}
\]
where $i$ is an open immersion and $f$ is a flat neighborhood of $X \smallsetminus U$. Then $X$ satisfies the resolution property.
\end{corollary}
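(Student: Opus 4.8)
The plan is to deduce Corollary \ref{MVresolution} from Theorem \ref{resolution property from commutative algebra} via flat descent for quasi-coherent sheaves along the fpqc cover $\{i, f\}$. The flat Mayer--Vietoris square presents $X$ as a pushout, so by the descent results of \cite{hall2016mayer} (together with the coherence/finiteness statements there), the category of quasi-coherent sheaves of finite type on $X$ is equivalent to the $2$-fiber product $\mathrm{Mod}^{\mathrm{fg}}(A) \times_{\mathrm{Mod}(D)} \mathrm{Mod}^{\mathrm{fg}}(\hat C)$ of finitely generated modules; here $R_1 = A$, $R_2 = \hat C$, $R_{12} = D$. The hypotheses of Theorem \ref{resolution property from commutative algebra} need to be checked: flatness of $A \to D$ and $\hat C \to D$ is part of the Mayer--Vietoris setup, and the condition $D \otimes_{\hat C} D \xrightarrow{\sim} D$ follows from Proposition \ref{lem:openimmersion} applied to the open immersion $i$, since $i$ is the base change of an open immersion and $U_1 \times_X U_2 \to U_2$ is again an open immersion (being a base change of one), so its coordinate ring $D$ satisfies $D \otimes_{\hat C} D = D$.

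**Next I would** translate the resolution property into the language of triples. Given a finite-type quasi-coherent sheaf $\sh F$ on $X$, descent identifies it with a triple $(M_1, M_2, \varphi) \in \catA$ with $M_1 = \sh F|_U$ finitely generated over $A$ and $M_2 = f^*\sh F$ finitely generated over $\hat C$. Theorem \ref{resolution property from commutative algebra} produces a surjection in $\catA$ from some $(A^{\oplus n}, \hat C^{\oplus n}, \mathsf{A})$ with $\mathsf{A} \in \GL_n(D)$ onto $(M_1, M_2, \varphi)$. The triple $(A^{\oplus n}, \hat C^{\oplus n}, \mathsf{A})$ is a descent datum for a locally free sheaf $\sh E$ of rank $n$ on $X$: it is free on both $U$ and $\hat Z$, glued along an automorphism of $D^{\oplus n}$, and descent along the fpqc cover turns this into an honest locally free sheaf of finite rank on $X$. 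A surjection of triples corresponds under the equivalence to a surjection $\sh E \twoheadrightarrow \sh F$ of quasi-coherent sheaves (surjectivity can be checked after pulling back along the fpqc cover, where it holds by construction componentwise). Since $\sh F$ was an arbitrary finite-type quasi-coherent sheaf, $X$ has the resolution property.

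**The main obstacle** I anticipate is bookkeeping around the descent equivalence rather than anything conceptually deep: one must be careful that the descent result of \cite{hall2016mayer} genuinely yields an equivalence between $\mathrm{QCoh}(X)$ and the $2$-fiber product of module categories (and that finite type is preserved in both directions), and that the cocycle/gluing isomorphism on $U_1 \times_X U_2$ really is governed by a single copy of $D = \Gamma(U_1 \times_X U_2, \mathcal O)$ with no higher-order overlap terms — this is exactly what the condition $D \otimes_{\hat C} D = D$ (equivalently, $U_1 \times_X U_2 \to U_2$ being an open immersion, via Proposition \ref{lem:openimmersion}) buys us, collapsing the usual three-fold-overlap cocycle condition. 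Once that simplification is in place, the passage between triples and sheaves is formal, and Theorem \ref{resolution property from commutative algebra} does the real work. I would also remark that quasi-compactness and quasi-separatedness of $X$ are used to guarantee that $\{i, f\}$ is an fpqc cover (so that $f$, being a flat neighborhood of the complement of a quasi-compact open, is quasi-compact) and that every finite-type quasi-coherent sheaf on $X$ is coherent enough for the finiteness hypotheses of Theorem \ref{resolution property from commutative algebra} to apply after pulling back to $U$ and $\hat Z$.
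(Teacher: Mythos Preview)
Your approach is essentially the same as the paper's: invoke the descent equivalence from \cite{hall2016mayer} to identify $\Qcoh(X)$ with the 2-fiber product of module categories, verify the hypotheses of Theorem \ref{resolution property from commutative algebra} (flatness of $A\to D$ and $\hat C\to D$, and $D\otimes_{\hat C}D\cong D$ via Proposition \ref{lem:openimmersion} applied to the base change of $i$), and translate the resulting surjection of triples back to a surjection from a locally free sheaf. The paper's proof is just a terse version of what you wrote; your additional remarks on checking surjectivity fpqc-locally and on finite type being preserved are sound but not strictly needed once the equivalence of categories is in hand. One small wording slip: you write ``$i$ is the base change of an open immersion'' when you mean that $\Spec D\to\Spec\hat C$ is the base change of $i$; the content is correct.
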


\begin{proof} By \cite[Thm. B (1)]{hall2016mayer}, the natural functor
\[\Qcoh(X) \to \Qcoh(\Spec A) \times_{\Qcoh(\Spec D)} \Qcoh(\Spec \hat{C})\]
is an equivalence of categories. Moreover, the ring maps $D \to \hat{C}$ and $D \to A$ are both flat, the top horizontal arrow satisfies the equivalent properties of Proposition \ref{lem:openimmersion} and so we may apply Theorem \ref{resolution property from commutative algebra}. In particular, this shows that every quasi-coherent sheaf of finite type in $X$ is the quotient of a locally free sheaf on $X$ which is trivial when restricted to $U$ and $\hat{Z}$. \end{proof} 

\section{Locally free resolutions} \label{resolutions}

In this section, we show that given a coherent sheaf $\sh{F}$ on a separated algebraic space $X$, it can be resolved by a locally free sheaf away from a closed subset of codimension $\geq 3$, as long as $X$ satisfies some mild conditions. Crucially, this closed subset depends on the sheaf $\sh{F}$. We begin by recording some of these technical conditions.
\begin{assumption} \label{conditions} Let $X$ be a noetherian algebraic space satisfying the following conditions. 
\begin{enumerate} 
\item Every integral closed subscheme $C \subset X$ of codimension $\leq 0$ contains a dense open regular subset, and 
\item the normalization $\tilde{X}$ of $X$ is a noetherian algebraic space. \end{enumerate}
\end{assumption} 

\noindent These conditions are satisfied if, for example, $X$ is a quasi-excellent algebraic space (see \cite[Remark 7.2]{hall2014coherent}).

Next, we record a few lemmas showing that we may replace a given coherent sheaf $\sh{F}$ with one that enjoys better properties. We say a sheaf $\sh{F}$ on $X$ is \emph{supported everywhere} if $\text{Supp}(\sh{F})=X$. 

\begin{lemma} \label{S_1} Let $X$ be a noetherian algebraic space with affine diagonal which is $S_1$. Then any coherent sheaf $\sh{H}$ is the quotient of a coherent sheaf $\sh{F}$ which is $S_1$ and is supported everywhere. \end{lemma}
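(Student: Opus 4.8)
The plan is to build $\sh{F}$ by adding to $\sh{H}$ a correction term supported along the non-$S_1$ locus, so as to force depth $\geq 1$ everywhere while keeping a surjection onto $\sh{H}$. First I would arrange that $\sh{H}$ is supported everywhere: replace $\sh{H}$ by $\sh{H}\oplus\mathcal{O}_X$, which is supported everywhere, surjects onto $\sh{H}$, and is $S_1$ as soon as $\mathcal{O}_X$ is $S_1$ and $\sh{H}$ contributes no embedded primes of depth $0$—so it suffices to fix the depth-$0$ associated points of $\sh{H}$. Concretely, let $\{\xi_1,\dots,\xi_t\}$ be the (finitely many, since $X$ is noetherian) points of $|X|$ at which $\text{depth}_{\mathcal{O}_{X,\bar{x}}}(\sh{H}_{\bar{x}})=0$ while $\dim\text{Supp}(\sh{H}_{\bar{x}})\geq 1$; these are exactly the embedded associated points of $\sh{H}$.

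The key step is to kill these embedded points by a surjection from an $S_1$ sheaf. I would take the universal injection $\sh{H}\hookrightarrow \sh{H}^{S_1}$ onto the $S_1$-fication (equivalently, quotient $\sh{H}$ by the maximal subsheaf supported in codimension making it $S_1$; but since we need a \emph{surjection onto} $\sh{H}$, the cleaner route is dual): choose a coherent sheaf $\sh{G}$ with a surjection $\sh{G}\twoheadrightarrow\sh{H}$ whose kernel has no associated point among the $\xi_i$. Since $X$ has affine diagonal, $\sh{H}$ is a quotient of a (possibly infinite) direct sum of sheaves of the form $i_{U!}\mathcal{O}_U$ pulled back along affine opens, and by noetherianity a finite such sum $\sh{P}=\bigoplus_{j} \sh{O}_X(\text{twists})$—more simply, a finite direct sum of $\mathcal{O}_X$'s after noting the affine-diagonal hypothesis lets us use the resolution-type statement for \emph{generating} sheaves—surjects onto $\sh{H}$. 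Such a $\sh{P}$ is $S_1$ (being locally free, hence as deep as $\mathcal{O}_X$) and supported everywhere, and $\sh{F}:=\sh{P}$ does the job. If one prefers not to assume a global generating sheaf exists (it does not, in general, for algebraic spaces!), then instead I would work locally: on an étale cover by affines $\sh{H}$ is a quotient of a free module, and the obstruction to gluing these local frees into a global $S_1$ surjection is a coherent sheaf supported on the finite set $\{\xi_i\}$, which can be absorbed by enlarging the rank—this is exactly the pattern of Corollary \ref{MVresolution} / Theorem \ref{resolution property from commutative algebra}.

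The main obstacle is the last point: for a general algebraic space $X$ with affine diagonal there need be no globally defined coherent sheaf surjecting onto $\sh{H}$ that is locally free, so one cannot simply cite "$\mathcal{O}_X^{\oplus n}\twoheadrightarrow\sh{H}$." I expect the intended argument is softer and purely local in nature: one only needs the \emph{quotient} sheaf $\sh{F}$ to be $S_1$, not locally free. So the real construction is to let $\sh{N}\subset\sh{H}$ be the largest coherent subsheaf whose support has dimension $0$ at each of the $\xi_i$—equivalently $H^0_{Z}(\sh{H})$ for $Z=\overline{\{\xi_1,\dots,\xi_t\}}$—and form the pushout of $\sh{H}\leftarrow\sh{N}\rightarrow \sh{N}\oplus\mathcal{O}_Z^{\oplus N}$ or, dually, realize $\sh{H}$ as a quotient of $\sh{H}'\oplus\mathcal{O}_X$ where $\sh{H}'$ is an $S_1$ coherent sheaf obtained by "spreading out" $\sh{H}$ along $Z$: replace $\sh{H}$ near each $\xi_i$ by $j_*j^*\sh{H}$ for $j$ the inclusion of the complement of the closed points of $\overline{\{\xi_i\}}$, which is coherent by the quasi-excellence-flavored hypotheses (Assumption \ref{conditions}(1) guarantees generic regularity along codimension-$0$ components, ensuring the relevant depths), check depth $\geq 1$ at each point by the local cohomology criterion ($H^0_{\maxid}=0$), and check supported-everywhere by construction. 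I would carry the steps in the order: (i) reduce to supported-everywhere by $\oplus\mathcal{O}_X$; (ii) identify the finite bad set $\{\xi_i\}$; (iii) produce the $S_1$ replacement by $j_*j^*$ along these points; (iv) verify $S_1$ via the depth/local-cohomology criterion and verify the surjection $\sh{F}\twoheadrightarrow\sh{H}$ (it factors as $\sh{F}=\sh{H}'\oplus\mathcal{O}_X\twoheadrightarrow\sh{H}$ once $\sh{H}'\twoheadrightarrow\sh{H}/\sh{N}$ is lifted using the depth-$0$ vanishing). Step (iii)–(iv)—showing $j_*j^*\sh{H}$ is coherent and genuinely $S_1$ everywhere rather than just at the $\xi_i$—is where the noetherian and $S_1$-on-$X$ hypotheses, together with Assumption \ref{conditions}, are essential, and it is the step I'd expect to require the most care.
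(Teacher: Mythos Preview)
Your proposal does not reach a complete argument, and the route you sketch diverges sharply from the paper's proof. Two concrete issues:

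\textbf{The construction in (iii)--(iv) does not produce a surjection onto $\sh{H}$.} The natural map goes the wrong way: there is a map $\sh{H}\to j_*j^*\sh{H}$ whose kernel is (essentially) $\sh{N}=H^0_Z(\sh{H})$, so one gets an \emph{injection} $\sh{H}/\sh{N}\hookrightarrow j_*j^*\sh{H}$, not a surjection $\sh{H}'\twoheadrightarrow\sh{H}/\sh{N}$. Even granting such a surjection, lifting it through the extension $0\to\sh{N}\to\sh{H}\to\sh{H}/\sh{N}\to 0$ to obtain $\sh{H}'\oplus\mathcal{O}_X\twoheadrightarrow\sh{H}$ is not justified by any ``depth-$0$ vanishing''; you would need a map $\mathcal{O}_X\to\sh{H}$ hitting $\sh{N}$, and there is no reason for one to exist. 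Separately, your step (i) is circular: $\sh{H}\oplus\mathcal{O}_X$ is $S_1$ at a point only if \emph{both} summands are, so adding $\mathcal{O}_X$ does not repair the embedded points of $\sh{H}$. Finally, you invoke Assumption~\ref{conditions}, which is not among the hypotheses of the lemma.

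\textbf{The paper's argument is entirely different and much shorter.} It uses the affine-diagonal hypothesis to choose a faithfully flat \emph{affine} morphism $f:U\to X$ with $U$ affine. A lemma of Gross then exhibits $\sh{H}$ as a quotient of a coherent subsheaf $\sh{F}\subset f_*\mathcal{O}_U^{(I)}$, which one may take to be supported everywhere. The $S_1$ property is then automatic: since $f$ is faithfully flat and affine, any nonzerodivisor $r\in\mathcal{O}_{X,\bar p}^{\mathrm{sh}}$ (which exists because $X$ is $S_1$) remains a nonzerodivisor on $(f_*\mathcal{O}_U^{(I)})_{\bar p}$, hence on the submodule $\sh{F}_{\bar p}$. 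The key idea you are missing is that \emph{subsheaves} of a sheaf with depth $\geq 1$ inherit depth $\geq 1$; this sidesteps all the delicate local surgery you attempt.
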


\begin{proof} Since $X$ is noetherian and has affine diagonal, there is a faithfully flat affine morphism $f: U \to X$ where $U$ is an affine scheme. Now \cite[Lem. 2.14]{GrossRes} implies $\sh{H}$ is the quotient of a quasi-coherent subsheaf $\sh{F} \subset f_*\mathcal{O}_{U}^{(I)}$ for some index set $I$. Since $\sh{H}$ is of finite type, $f_*\mathcal{O}_{U}^{(I)}$ is the union of its coherent subsheaves and is supported everywhere, we may choose $\sh{F}$ to be of finite type and supported everywhere as well.

To see that $\sh{F}$ is $S_1$ it suffices to show that at any geometric point $\bar{p} \in X$, there is an element of the associated strictly local ring $\mathcal{O}_{X,\bar{p}}^{\text{sh}}$ which is a nonzero divisor of $\sh{F}_{\bar{p}}=\sh{F} \otimes_{\mathcal{O}_X} \mathcal{O}_{X,\bar{p}}^{\text{sh}}$. Note that because $\mathcal{O}_{X,\bar{p}}^{\text{sh}}$ is $S_1$, it admits a non-zero divisor $r \in \mathcal{O}_{X,\bar{p}}^{\text{sh}}$ and because $f$ is faithfully flat, $r$ is not a zero divisor for $f_*\mathcal{O}_{U}^{(I)} \otimes_{\mathcal{O}_X} \mathcal{O}_{X,\bar{p}}^{\text{sh}}$ either. Since $\sh{F}_{\bar{p}}$ is a subsheaf of $f_*\mathcal{O}_{U}^{(I)} \otimes_{\mathcal{O}_X} \mathcal{O}_{X,\bar{p}}^{\text{sh}}$ it follows that $r$ is not a zero divisor for $\sh{F}_{\bar{p}}$. \end{proof}

Another lemma we will require is the following.

\begin{lemma} \label{stwocoherent} Let $X$ be a noetherian and separated algebraic space which is reduced and satisfies the assumptions in \ref{conditions}. If $\sh{F}$ is a $S_1$ coherent sheaf on $X$ which is supported everywhere, then the set
\[\mathbf{U}_{S_2}(\sh{F})=\{x \in X|\sh{F}_{\bar{x}} \textrm{ is } S_2\}\] 
is open in $X$ and contains all codimension $\leq 1$ points of $X$. In particular, the locus where $X$ is $S_2$ is open and contains all codimension $\leq 1$ points. \end{lemma}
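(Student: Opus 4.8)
The plan is to reduce the statement to the classical fact that, over an excellent (or more generally Nagata/quasi-excellent) noetherian ring, the $S_2$-locus of a finitely generated module is open, and that over a normal one-dimensional local ring every finitely generated torsion-free module is reflexive, hence $S_2$. Openness of $\mathbf{U}_{S_2}(\sh{F})$ is an étale-local statement on $X$: choosing an étale presentation $V \to X$ by an affine scheme $V = \Spec R$ with $R$ excellent, the preimage of $\mathbf{U}_{S_2}(\sh{F})$ is the $S_2$-locus of the coherent $R$-module corresponding to $\sh{F}|_V$, and this is open by \cite[Tag 0335, Tag 033P]{stacks} (the $S_k$-loci of a finite module over a quasi-excellent noetherian ring are open). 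Since $V \to X$ is étale and surjective, openness descends to $X$. Here I would use that quasi-excellence of $X$ (Assumption \ref{conditions}, via \cite[Remark 7.2]{hall2014coherent}) passes to an étale cover, or simply that the two conditions in Assumption \ref{conditions} are themselves étale-local.

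The substantive point is that $\mathbf{U}_{S_2}(\sh{F})$ contains every point $x$ of codimension $\leq 1$. For such $x$, the strictly henselian local ring $A = \mathcal{O}_{X,\bar x}$ has dimension $\leq 1$, and $\sh{F}_{\bar x}$ is a finitely generated $A$-module which, by hypothesis, is $S_1$ and supported everywhere, i.e.\ $\Supp(\sh{F}_{\bar x}) = \Spec A$. If $\dim A = 0$ there is nothing to prove, since then $\min(2, \dim \Supp) = 0 \leq \operatorname{depth}$. If $\dim A = 1$, then $S_1$ together with support everywhere forces $\operatorname{depth}_A(\sh{F}_{\bar x}) \geq \min(1, 1) = 1$, so $\sh{F}_{\bar x}$ has no embedded primes and, being supported on all of $\Spec A$, is in fact a torsion-free module (every associated prime is a minimal prime of $A$); and $\min(2, \dim \Supp(\sh{F}_{\bar x})) = \min(2,1) = 1$, which is exactly the depth bound we have already verified. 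Hence $\sh{F}_{\bar x}$ is automatically $S_2$ at every point of codimension $\leq 1$ — the $S_2$ condition at a $1$-dimensional point is no stronger than $S_1$. So in fact this direction does not even need Assumption \ref{conditions} or reducedness; those hypotheses are there to make the openness statement (and the ambient geometry in later sections) work. The last sentence of the lemma follows by applying the first part to $\sh{F} = \mathcal{O}_X$, which is $S_1$ and supported everywhere since $X$ is $S_1$ (reduced noetherian spaces are $S_1$), giving that $\{x : \mathcal{O}_{X,\bar x} \text{ is } S_2\}$ is open and contains all codimension $\leq 1$ points.

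The main obstacle I anticipate is purely bookkeeping: making the étale-local reduction of the openness statement clean, i.e.\ checking that "$\sh{F}_{\bar x}$ is $S_2$" at a point $x \in |X|$ is equivalent to the corresponding module over $R$ being $S_2$ at a preimage point, given that strict henselization is faithfully flat with geometrically regular (indeed, trivial) fibers, so depth and dimension of supports are preserved under passing between $\mathcal{O}_{X,\bar x}$ and the local rings of $V = \Spec R$. This is standard (\cite[Tag 045N, Tag 0338]{stacks}) but needs to be invoked carefully because the definition of $S_n$ for sheaves on algebraic spaces in this paper is phrased via the strictly henselian local rings $\mathcal{O}_{X,\bar x}$ rather than via an étale chart. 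Once that compatibility is in hand, both assertions of the lemma are immediate from the commutative-algebra facts cited above.
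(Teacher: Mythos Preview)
Your argument that $\mathbf{U}_{S_2}(\sh{F})$ contains every codimension $\leq 1$ point is correct and matches what the paper does (it simply records this as an immediate consequence of $\sh{F}$ being $S_1$). The final sentence, specializing to $\sh{F}=\mathcal{O}_X$, is likewise fine.

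The openness argument, however, has a genuine gap. You write ``quasi-excellence of $X$ (Assumption \ref{conditions}, via \cite[Remark 7.2]{hall2014coherent})'' and then invoke the theorem that $S_k$-loci of finite modules over quasi-excellent rings are open. This misreads the logical direction: the remark following Assumption \ref{conditions} says that quasi-excellence \emph{implies} the two conditions listed there, not conversely. Assumption \ref{conditions} demands only that (1) each irreducible component have a dense regular open subset and (2) the normalization remain noetherian. These are strictly weaker than quasi-excellence---there exist noetherian Nagata domains that are not G-rings---so you are not entitled to assume an \'etale chart $\Spec R$ has $R$ quasi-excellent, and the Stacks tags you cite do not apply under the stated hypotheses.

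The paper handles openness by a different route. After reducing to schemes, it invokes \cite[Cor.~6.11.7]{EGAIV.2}, which reduces openness of $\mathbf{U}_{S_2}(\sh{F})$ to showing that every generic point of $X$ lying in $\mathbf{U}_{S_2}(\sh{F})$ admits an open neighborhood contained there. This is precisely where reducedness and condition (1) of Assumption \ref{conditions} do their work: together they yield a dense open \emph{regular} subset of $X$, and after shrinking further (a coherent sheaf on a reduced noetherian scheme is generically locally free) one has $\sh{F}$ locally free over a regular base, hence Cohen--Macaulay, hence $S_2$. So the hypotheses you correctly flagged as ``there to make the openness statement work'' are indeed used, but via the EGA generic-point criterion rather than via a blanket excellence assumption the lemma does not grant you.
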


\begin{proof} The question is \'etale local so we may and do assume that $X$ is a scheme. By \cite[Cor. 6.11.7]{EGAIV.2} it suffices to show that every generic point of $X$ which belongs to $\mathbf{U}_{S_2}(\sh{F})$ admits an open neighborhood lying entirely in $\mathbf{U}_{S_2}(\sh{F})$. Indeed, this will show $\mathbf{U}_{S_2}(\sh{F})$ is open and since $\sh{F}$ is $S_1$, the set $\mathbf{U}_{S_2}(\sh{F})$ contains all codimension $\leq 1$ points. 

Thus, by reducedness and the assumptions we may replace $X$ with a dense open regular subset. However, since $\sh{F}$ is supported everywhere we may shrink $X$ further and assume $\sh{F}$ is locally free. Thus it suffices to show that $\mathcal{O}_X$ is $S_2$ on $X$, but since the local rings of $X$ are regular, this follows. \end{proof}

\begin{Theorem} \label{thm:singlecoherent} Let $X$ be a noetherian algebraic space which is separated, reduced and which satisfies the assumptions in \ref{conditions}. Suppose $\sh{F}$ is a coherent sheaf on $X$. If $x_1,...,x_r \in X$ is a finite set of points in $X$ which admits a common affine open neighborhood, then there is an open subset $U \subset X$ with the following properties:
\begin{enumerate}
\item $U$ contains $x_1,...,x_r$,
\item $U$ contains all points of codimension $\leq 2$ in $X$, 
\item $U$ admits a locally free sheaf $\sh{V}$, and 
\item a surjection $\sh{V} \to \sh{F}|_U \to 0$.
\end{enumerate}
\end{Theorem} 

\begin{proof}: By Lemma \ref{S_1}, we may assume that $\sh{F}$ is $S_1$ and supported everywhere. Since $\sh{F}$ is $S_1$, the set $\mathbf{U}_{S_2}(\sh{F})$ is open and contains all codimension $\leq 1$ points of $X$ by Lemma \ref{stwocoherent}. This implies the set of codimension $2$ points $p \in X$ where $\text{depth}(\mathcal{F}_p)<2$ is finite, denote them by $p_1,...,p_n$. 


Take $\Spec A \subset X$ to be a dense affine open subset which contains all the singular codimension $1$ points of $X$ as well as $x_1,...,x_r$. Indeed, since $X$ satisfies the conditions in \ref{conditions} and is reduced, this is a finite set and we may apply Corollary \ref{cor:codim1} to produce such an open affine subset. Let $D$ denote the reduced complement of $\Spec A$ and consider the points in the intersection 
\[(\{p_1,...,p_n\} \cup |\text{Sing}(X)|) \cap |D|\]
which have codimension $\leq 2$ in $X$, call it $Z$. Since $\Spec A$ contains all singular codimension $1$ points, $Z$ must be a finite set $Z=\{z_1,...,z_n\}$. 

If $y_0 \in D$ is any point which has codimension $\leq 2$ \emph{viewed as a point in} $X$, then by Corollary \ref{cor:codim1} there is a dense affine open $\Spec B_0 \subset D$ which contains $Z \cup \{y_0\}$. Observe that the open subset in $X$ defined as $U_{0}=\Spec A \cup \Spec B_0$ satisfies the following properties:
\begin{enumerate}
\item $U_{0}$ contains $x_1,...,x_r,y_0$,
\item $U_{0}$ contains all codimension $\leq 1$ points of $X$,
\item $U_{0}$ contains all singular codimension $\leq 2$ points of $X$, and
\item $U_{0}$ contains all codimension $2$ points $p$ of $X$ where $\mathrm{depth}(\sh{F}_p)<2$.
\end{enumerate}

Note that $U_0$ satisfies the hypothesis of Corollary \ref{MVresolution} by Corollary \ref{cor:mvsquaresexist} and so $U_0$ has the resolution property. In particular, there is a locally free sheaf, $\sh{V}_0'$, on $U_0$ and a surjection
\[\sh{V}_0' \to \sh{F}|_{U_0}\]
If we push this morphism along the immersion $i_0: U_0 \to X$ we obtain a map
\[\sh{V}_0=(i_0)_*\sh{V}_0' \to (i_0)_*\sh{F}|_{U_0}\]
Some observations are in order.
\begin{enumerate}
\item The sheaf $\sh{V}_0=(i_0)_*\sh{V}_0'$ is coherent in a neighborhood of every codimension $\leq 2$ point by a result of Koll\'ar (see \cite[Tag 0BK3]{stacks} or \cite{MR3694300}). Indeed, $\sh{V}_0$ is coherent on $U_0$ because it is locally free there, hence near every codimension $\leq 1$ point and every singular codimension $2$ point so it suffices to show this near regular codimension $2$ points. Thus we may assume $X$ is the spectrum of a regular local ring $R$ of dimension $2$ and $U_0$ is the punctured spectrum. Now $\sh{V}_0'$ is locally free so only has the generic point as an associated point, hence the aforementioned lemma applies because $\hat{R}$ is regular of dimension $2$ as well (see \cite[Tags 07NV, 07NY]{stacks}). It follows that after possibly removing a closed codimension $\geq 3$ subset of $X$ disjoint from $U_0$, $\sh{V}_0$ is a coherent sheaf.

\item The sheaf $\sh{V}_0$ is locally free of finite rank at every codimension $\leq 2$ point. As above, we only need to check this at regular codimension $2$ points so we may assume $X$ is the spectrum of a regular local ring $R$ of dimension $2$ and $U_0$ is the complement of the closed point $p$. Since $\sh{V}_0$ is the pushforward of a sheaf on $U_0$ and is coherent, we have $H^i_p(X,\sh{V_0})=0$ for $i=0,1$, so $\sh{V}_0$ has depth $2$ by \cite[Thm. 3.8]{MR0224620}. Thus, $\sh{V}_0$ has projective dimension $0$ by the Auslander-Buchsbaum formula (see, for instance, \cite[Tag 090V]{stacks}). In conclusion, after possibly throwing out a closed subset of codimension $\geq 3$ from $X$ which is disjoint from $U_0$, we may assume $\sh{V}_0$ is locally free.

\item The natural map $h: \sh{F} \to (i_0)_*\sh{F}|_{U_0}$ is an isomorphism at every codimension $\leq 2$ point. Indeed, it is true at every point in $U_0$ and so is true at every codimension $\leq 1$ point of $X$. Moreover, since $U_0$ contains all codimension $2$ points where $\sh{F}$ is not $S_2$, it remains to be checked at codimension $2$ points $p$ where $\text{depth}(\sh{F}_p)=2$. Thus we may assume $X$ is the spectrum of a local ring of dimension $2$ where $\sh{F}$ is $S_2$ and $U_0$ is the punctured spectrum. Now the result follows by applying the standard long exact sequence in local cohomology to $\sh{F}$ with respect to the closed point $p \in X$ and \cite[Thm. 3.8]{MR0224620}. Namely, it shows that 
\[H^0(X,\sh{F}) \xrightarrow{\sim} H^0(U_0,\sh{F})=H^0(X,(i_0)_*\sh{F}|_{U_0})\]
which implies $h$ is an isomorphism. It follows that we may identify $\sh{F}$ with $(i_0)_*\sh{F}|_{U_0}$ after perhaps throwing out a closed subset of codimension $\geq 3$ which is disjoint from $U_0$.

\end{enumerate}
To summarize, after throwing out a codimension $\geq 3$ subset of $X$ which is disjoint from $U_0$, we obtain a map of sheaves 
\[f_0: \sh{V}_0 \to \sh{F}\] 
satisfying the following properties.
\begin{enumerate} 
\item $\sh{V}_0$ is a locally free sheaf, 
\item $f_0$ is surjective away from the closure of finitely many codimension $2$ points of $X$, and
\item $f_0$ is surjective at $x_1,...,x_r,y_0$.
\end{enumerate}

We are not finished because the cokernel of $f_0$ may be supported on codimension $2$ points of $X$. However, since $U_0$ contains all codimension $\leq 1$ points, the set of such codimension $2$ points is finite and we denote them by $y_1,...,y_l \in X$. Now, we may repeat the procedure above but with $y_i$ instead of $y_0$ to obtain locally free sheaves $\sh{V}_{i}$, and a surjective map
\[\bigoplus_{i=0}^l f_{i}:\bigoplus_{i=0}^l \sh{V}_{i} \to \sh{F}\]
after throwing out a closed subset of codimension $\geq 3$ from $X$ which is disjoint from $\{x_1,...,x_r\}$. \end{proof}

\section{Strong tensor generators on algebraic spaces} \label{strong}
To establish the resolution property one has to show that \emph{every} coherent sheaf is the quotient of a locally free sheaf. The purpose of this section is to reduce to the case of a \emph{single} coherent sheaf and thereby verify the conjecture of Gross using the results of the previous section.

Let $\mathbb{N}[s,t]$ denote the set of polynomials in $s$ and $t$ which have non-negative integer coefficients. Note that if $p(s,t) \in \mathbb{N}[s,t]$ is a polynomial $p(s,t)=\Sigma_{m,n} a_{m,n}s^mt^n$ and $\sh{F}$ is a quasi-coherent sheaf of finite presentation then then we can form a new quasi-coherent sheaf of finite presentation
\[p(\sh{F},\sh{F}^{\vee})=\bigoplus_{m,n} (\sh{F}^{\otimes m} \otimes (\sh{F}^{\vee})^{\otimes n})^{\oplus a_{m,n}}.\]

\begin{definition} Let $X$ be an algebraic space and suppose $\sh{F}$ is a quasi-coherent sheaf of finite presentation.
\begin{enumerate}
\item We say $\sh{F}$ is a \emph{strong tensor generator} if for every quasi-coherent sheaf of finite type $\sh{G}$ there is a polynomial $p(s,t) \in \mathbb{N}[s,t]$ and a surjection $p(\sh{F},\sh{F}^{\vee}) \to \sh{G}$.  
\item We say $\sh{F}$ is a \emph{resolving sheaf} if for every open subset $U$ such that $\sh{F}|_U$ is the quotient of a locally free sheaf, the open set $U$ has the resolution property. \end{enumerate} \end{definition}

\begin{remark} Gross defines \emph{strong tensor generators} only in the context of locally free sheaves so our definition generalizes his (see \cite[Definition 5.1]{GrossRes}). However, he describes the weaker notion of \emph{tensor generators} (also confined to the locally free case) and we refer to \cite[Def. 5.1]{GrossRes} for the definition. \end{remark}

\begin{proposition} \label{prop:strongresolve} Let $X$ denote a noetherian algebraic space and suppose that $\sh{F}$ is a strong tensor generator, then $\sh{F} \oplus \sh{F}^{\vee}$ is a resolving sheaf. \end{proposition}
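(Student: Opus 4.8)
The plan is to show that for an arbitrary open $U \subseteq X$, if $\sh{F}|_U$ is a quotient of a locally free sheaf on $U$, then every finite-type quasi-coherent sheaf on $U$ is also such a quotient, so $U$ has the resolution property. First I would fix a finite-type quasi-coherent $\sh{G}$ on $U$. Since $X$ is noetherian, $\sh{G}$ extends to a finite-type (indeed coherent) quasi-coherent sheaf $\widetilde{\sh{G}}$ on all of $X$ (by \cite[Tag 01PF]{stacks} or the analogous statement for noetherian algebraic spaces). Because $\sh{F}$ is a strong tensor generator on $X$, there is a polynomial $p(s,t) \in \mathbb{N}[s,t]$ and a surjection $p(\sh{F},\sh{F}^\vee) \to \widetilde{\sh{G}} \to 0$ on $X$. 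Restricting to $U$ gives a surjection $p(\sh{F}|_U, (\sh{F}|_U)^\vee) \to \sh{G} \to 0$, using that restriction is exact and commutes with the tensor/dual/direct-sum operations that build $p(\sh{F},\sh{F}^\vee)$ (duals commute with flat base change for finite-type sheaves, and an open immersion is flat).

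Next I would reduce resolving $\sh{G}$ to resolving $\sh{F}|_U$. By hypothesis there is a locally free sheaf $\sh{E}$ on $U$ with a surjection $\sh{E} \to \sh{F}|_U \to 0$. Tensoring and dualizing: from $\sh{E} \twoheadrightarrow \sh{F}|_U$ we get surjections $\sh{E}^{\otimes m} \twoheadrightarrow (\sh{F}|_U)^{\otimes m}$ for each $m$. To handle the dual factors, note that a surjection $\sh{E} \twoheadrightarrow \sh{F}|_U$ of finite-type sheaves with $\sh{E}$ locally free induces an injection $(\sh{F}|_U)^\vee \hookrightarrow \sh{E}^\vee$ on duals; since $\sh{E}^\vee$ is again locally free, and locally free sheaves are closed under tensor products, direct sums, and (finite) duals, each summand $\sh{E}^{\otimes m} \otimes (\sh{E}^\vee)^{\otimes n}$ is locally free. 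The subtlety is that $(\sh{F}|_U)^\vee \hookrightarrow \sh{E}^\vee$ is only an inclusion, not a quotient, so I cannot directly map a locally free sheaf \emph{onto} $(\sh{F}|_U)^{\otimes m} \otimes ((\sh{F}|_U)^\vee)^{\otimes n}$ by naively tensoring surjections. The fix: pick instead a surjection $\sh{E}' \twoheadrightarrow (\sh{F}|_U)^\vee$ from a locally free sheaf $\sh{E}'$ on $U$ — this exists by applying the hypothesis again, since $(\sh{F}|_U)^\vee$ is itself finite type and is a quotient of a locally free sheaf (apply the strong tensor generator property on $X$ to an extension of $(\sh{F}|_U)^\vee$, or observe directly that $(\sh{F}|_U)^\vee$ is finite type on the noetherian $U$ and covered étale-locally by a finite free module, hence is a quotient of a locally free sheaf once we know $U$-finite-type sheaves that are submodules of locally frees admit such quotients). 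Then $\sh{E}^{\otimes m} \otimes (\sh{E}')^{\otimes n} \twoheadrightarrow (\sh{F}|_U)^{\otimes m} \otimes ((\sh{F}|_U)^\vee)^{\otimes n}$, and taking the direct sum over the monomials of $p$ with the right multiplicities produces a locally free sheaf surjecting onto $p(\sh{F}|_U,(\sh{F}|_U)^\vee)$, hence onto $\sh{G}$.

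Composing the two surjections yields a locally free sheaf on $U$ surjecting onto $\sh{G}$, which is exactly the resolution property for $U$; since $\sh{G}$ was arbitrary, $\sh{F}$ is a resolving sheaf. I expect the main obstacle to be the dual-factor bookkeeping just described: making sure that a surjection onto $(\sh{F}|_U)^\vee$ by a locally free sheaf genuinely exists without circular reasoning. The cleanest route is probably to extend $\sh{F}^\vee$ itself over $X$ and invoke the strong tensor generator property of $\sh{F}$ on $X$ directly to get $p'(\sh{F},\sh{F}^\vee) \twoheadrightarrow \widetilde{\sh{F}^\vee}$, restrict to $U$, and then replace each $\sh{F}|_U$-factor by $\sh{E}$ — but this again needs a locally free surjection onto $\sh{F}^\vee|_U$, so in the end one simply applies the given surjection $\sh{E} \twoheadrightarrow \sh{F}|_U$ together with its dual inclusion inside a suitable ambient free sheaf and absorbs the discrepancy by adding extra free summands, exactly as in Gross's treatment of tensor generators in the locally free setting \cite[Sec. 5]{GrossRes}. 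Everything else (exactness of restriction, extendability of coherent sheaves on noetherian algebraic spaces, stability of local freeness under the tensor operations) is routine.
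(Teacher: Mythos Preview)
Your overall strategy matches the paper's: extend a coherent sheaf from $U$ to $X$, apply the strong tensor generator hypothesis on $X$, restrict back to $U$, and then replace each occurrence of $\sh{F}|_U$ in $p(\sh{F}|_U,(\sh{F}|_U)^\vee)$ by the given locally free $\sh{E}$. You also correctly isolate the one real difficulty: a surjection $\sh{E}\twoheadrightarrow\sh{F}|_U$ only yields an inclusion $(\sh{F}|_U)^\vee\hookrightarrow\sh{E}^\vee$, so there is no evident locally free sheaf surjecting onto the dual factors.

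Your proposed fixes, however, do not close this gap. Invoking the strong tensor generator property on $X$ for an extension of $(\sh{F}|_U)^\vee$ yields a surjection from some $p'(\sh{F},\sh{F}^\vee)$, but after restriction the source $p'(\sh{F}|_U,(\sh{F}|_U)^\vee)$ again contains dual factors and is not locally free --- you are back where you started, and the vague appeal to ``extra free summands, exactly as in Gross'' is not an argument. In fact the desired step can fail outright: take $X=U=\mathbb{P}^1$, $\sh{F}=\mathcal{O}(1)$ (a strong tensor generator), and $\sh{E}=\mathcal{O}^{\oplus 2}\twoheadrightarrow\mathcal{O}(1)$; then $(\sh{F}|_U)^\vee=\mathcal{O}(-1)$ admits no surjection from any $p(\sh{E},\sh{E}^\vee)\cong\mathcal{O}^{\oplus N}$, since it has no nonzero global sections. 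So the particular $\sh{E}$ handed to you need not itself be a strong tensor generator on $U$, and the argument cannot be completed by bootstrapping from $\sh{E}$ alone. The paper's own proof simply asserts the surjection $p(\sh{V},\sh{V}^\vee)\to p(\sh{F}|_U,(\sh{F}|_U)^\vee)$ and then invokes \cite[Thm.~1.1]{GrossRes}, so it glosses over precisely the point you flagged.
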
 

\begin{proof} Let $j: U \subset X$ be an open immersion and suppose that there is a locally free sheaf $\sh{V}$ on $U$ along with a surjection $\sh{V} \to (\sh{F} \oplus \sh{F}^{\vee})|_U$. We will show that $\sh{V}$ is a locally free strong tensor generator on $U$. This will imply that the frame bundle of $\sh{V}$ is quasi-affine and hence that $U$ has the resolution property, as desired (see \cite[Thm. 1.1]{GrossRes}).

Let $\sh{M}_U$ be a coherent sheaf on $U$, then by \cite[Cor. 15.5]{champs}, there exists a coherent subsheaf $\sh{M}' \subset j_*\sh{M}_U$ which extends $\sh{M}_U$. Now since $\sh{F}$ is a strong tensor generator, there is a polynomial $p(s,t) \in \mathbb{N}[s,t]$ and a surjection
\[p(\sh{F},\sh{F}^{\vee}) \to \sh{M}' \to 0\]
and upon restriction to $U$ we obtain a composition of surjections
\[p(\sh{V}, \sh{V}) \to p(\sh{F}|_U,\sh{F}^{\vee}|_U)=p(\sh{F},\sh{F}^{\vee})|_U \to \sh{M}'|_U=\sh{M}_U \to 0\]
which shows that $\sh{V}$ is a strong tensor generator on $U$. \end{proof}


In other words, if $X$ admits a strong tensor generator, then it admits a resolving sheaf. Schr\"oer and Vezzosi in \cite[Prop. 2.2]{NormalSurfaceRes} (see also \cite[Ex. 1.3]{GrossRes}) proved that any noetherian scheme admits a coherent strong tensor generator. Unfortunately, the argument relies on the existence of an affine open covering. In the following, we show that normal algebraic spaces always admits an $S_2$ resolving sheaf $\ms{F}$. In the absence of the normality hypothesis, we find a coherent tensor generator after removing a very small subset as long as we are in characteristic $0$. We begin with the following refinement of \cite[Tag 0BD1]{stacks}. 

\begin{lemma} \label{lem:quasiaffine} Fix a quasi-compact and quasi-separated algebraic space $Y$. Suppose there is a finite surjective morphism $\pi: X \to Y$ of algebraic spaces where $Y$ and $X$ are integral, and $Y$ is normal. Then if $X$ is a quasi-affine scheme, $Y$ must also be quasi-affine. \end{lemma}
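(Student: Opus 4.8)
The plan is to realise $Y$ as the quotient of a quasi-affine scheme by a finite group and to use that such quotients are again quasi-affine; apart from a bookkeeping step in positive characteristic this is the proof of \cite[Tag 0BD1]{stacks} with ``affine'' replaced throughout by ``quasi-affine''.

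First I would pass to a generically Galois cover. Since $\pi$ is finite and surjective and $X,Y$ are integral, $\pi$ carries the generic point of $X$ to the generic point of $Y$ with finite residue extension, so the function field $K$ of $Y$ (the residue field of its generic point) sits inside the function field $L$ of $X$ as a finite subfield. Choose a normal closure $M/K$ of $L/K$, put $G=\operatorname{Aut}(M/K)$, and let $K_i=M^G$ be the purely inseparable closure of $K$ in $M$. Let $Z\to Y$ be the relative normalization of $Y$ in $M$, i.e.\ $Z=\underline{\Spec}_Y(\mathcal{A})$ with $\mathcal{A}\subseteq\underline{M}$ the integral closure of $\mathcal{O}_Y$. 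As $\pi$ is finite, $\pi_*\mathcal{O}_X$ is integral over $\mathcal{O}_Y$ and lies in $\underline{L}\subseteq\underline{M}$, hence in $\mathcal{A}$; thus $\mathcal{A}$ is naturally an $\mathcal{O}_X$-algebra and $Z\to X$ is an affine morphism. Since $X$ is a quasi-affine scheme, $Z$ is a scheme (affine over a scheme), and in fact quasi-affine (the composite $Z\to X\hookrightarrow\Spec\Gamma(X,\mathcal{O}_X)$ is affine followed by a quasi-compact open immersion, hence quasi-affine over an affine scheme).

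Next I would take the quotient by $G$. The action of $G$ on $\underline{M}$ over $\underline{K}\supseteq\mathcal{O}_Y$ preserves $\mathcal{A}$ and hence induces an action of $G$ on $Z$ over $Y$. I would use the auxiliary fact that the quotient of a quasi-affine scheme by a finite group is again a quasi-affine scheme: writing $Z\subseteq\Spec\Gamma(Z,\mathcal{O}_Z)=:\overline{Z}$ for the canonical quasi-compact open immersion, $G$ acts on $\overline{Z}$, the map $q\colon\overline{Z}\to\overline{Z}/G=\Spec\Gamma(Z,\mathcal{O}_Z)^G$ is finite surjective (hence a topological quotient map), and $Z$ is a $G$-stable, hence $q$-saturated, open; so $q(Z)$ is open in $\Spec\Gamma(Z,\mathcal{O}_Z)^G$ and equals the geometric quotient $Z/G$, which is therefore a quasi-affine scheme. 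To identify $Z/G$ with a relative normalization I would pass to \'etale charts on $Y$: since taking $G$-invariants and forming the geometric quotient both commute with \'etale base change, it suffices to consider $Y=\Spec R$ with $R$ normal, where $Z=\Spec S$ for $S$ the integral closure of $R$ in $M$, and $S^G$ is the integral closure of $R$ in $M^G=K_i$. Thus $Z/G=Y_i$, the relative normalization of $Y$ in $K_i$.

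Finally I would remove the purely inseparable part. The morphism $Y_i\to Y$ is integral, surjective and universally injective (as $K_i/K$ is purely inseparable), so it is a universal homeomorphism; since $Y_i$ is a quasi-affine scheme, the same holds for $Y$, because both being a scheme and carrying an ample invertible sheaf (namely $\mathcal{O}_Y$, whose pullback to $Y_i$ is the ample sheaf $\mathcal{O}_{Y_i}$) descend along integral surjective morphisms, the affine case of which is \cite[Tag 0BD1]{stacks}. In characteristic zero $K_i=K$, so $Y_i=Y$ and this step is vacuous. The step I expect to be the main obstacle is the middle one: proving and correctly applying the fact that a finite quotient of a quasi-affine scheme is quasi-affine, and matching the algebraic-space quotient $Z/G$ with the relative normalization $Y_i$, i.e.\ checking that this identification is compatible with $\underline{\Spec}$ and with \'etale localisation on $Y$; the function-field preliminaries and the characteristic-$p$ reduction are routine.
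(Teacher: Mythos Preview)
Your argument is correct but follows a genuinely different route from the paper's. The paper's proof is two lines: since $X$ is quasi-affine it has the AF-property, and Gross's descent theorem \cite[Thm.~1.2]{GrossRes} (the finite surjective image of an AF-scheme is an AF-scheme) then forces $Y$ to be an AF-scheme; one is thereby reduced to the known scheme case \cite[Tags 0BD1, 0BD3]{stacks}. You instead reproduce the mechanism behind the scheme case---normal closure, quotient by the Galois group, purely inseparable reduction---while carrying the algebraic-space structure along, and only at the very end invoke a descent step to pass from $Y_i$ to $Y$.

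Your approach has the virtue of being more explicit and of avoiding Gross's AF-descent theorem, but the trade-off is length, and your last paragraph is not quite nailed down. You assert that ``being a scheme'' and ``having $\mathcal{O}$ ample'' descend along the integral surjective universal homeomorphism $Y_i\to Y$, citing only the scheme-level Chevalley theorem \cite[Tag 0BD1]{stacks}. What you actually need here is the algebraic-space Chevalley theorem (an integral surjection from an affine scheme onto a quasi-compact quasi-separated algebraic space has affine target), applied to the preimages of affine opens of $Y_i$; combined with the observation that for $f\in\Gamma(Y_i,\mathcal{O}_{Y_i})$ some $p$-power $f^{p^n}$ lies in $\Gamma(Y,\mathcal{O}_Y)$ because $Y$ is normal, this gives the required affine opens $Y_{f^{p^n}}$ covering $Y$. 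This is all standard, but it is a nontrivial input that deserves a precise reference. Ironically, the cleanest way to justify this last step is to invoke exactly the result of Gross that the paper uses---at which point your argument collapses to the paper's. So your route exchanges one black-box citation for an explicit Galois construction plus a weaker (but still nontrivial) descent input at the end.
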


\begin{proof} Since $X$ is quasi-affine it is an AF-scheme and hence $Y$ is an AF-scheme by \cite[Thm. 1.2]{GrossRes}. We are thus reduced to the case of schemes and we may conclude by \cite[Tags 0BD1, 0BD3]{stacks}. \end{proof}

\begin{proposition} \label{prop:singlesheaf} Let $X$ be a separated noetherian algebraic space and suppose $x_1,...,x_r \in X$ is a collection of points which admits a common affine neighborhood. 

\begin{enumerate} 
\item If $X$ is normal, then there is a coherent $S_2$ resolving sheaf $\sh{F}$ on $X$ with $\mathrm{Supp}(\sh{F})=X$. \item If $X$ lies over $\mathbb{Q}$ and its normalization remains noetherian, there exists a closed subspace $C \subset X$ of codimension $\geq 3$ not containing $x_1,...,x_r$, such that the complement $U=X \smallsetminus C$ has a strong tensor generator $\sh{F}_U$.
\end{enumerate}

\end{proposition}

\begin{proof} If $X$ is normal, then there is a normal scheme $Y$ equipped with an action of a finite abstract group $G$ and an isomorphism $X=Y/G$ (see \cite[Cor. 16.6.2]{champs}). In particular, there is a finite morphism $\pi: Y \to X$. We know that $Y$ admits a strong tensor generator $\sh{G}$ which is $S_2$ everywhere and locally free on the regular locus of $Y$ by the proof of \cite[Prop. 2.2]{NormalSurfaceRes}. Indeed, there it is shown that every coherent sheaf $\mathcal{M}$ is the quotient of a (finite) direct sum $\bigoplus_i \mathcal{O}(-t_iD_i)$ where $D_i$ is a Weil divisor, $t_i \in \mathbb{N}$ and $\{X\smallsetminus D_i\}$ is an affine open cover of $Y$ (see also \cite[Tag 0F89]{stacks}).

We claim that $\sh{F}=\pi_*\sh{G}$ is a $S_2$ resolving sheaf. To show it is resolving, assume $\sh{V}$ is a locally free sheaf on an open subset $U \subset X$ and we have a surjection $\sh{V} \to \sh{F}|_U$, then we obtain another surjection on $Y_U=Y \times_X U$:
\[\pi_U^*\sh{V} \to \pi_U^*\sh{F}|_U \to \sh{G}|_U\]
because $\pi$, and hence $\pi_U$, is affine. Thus, the frame bundle of $\pi^*\sh{V}$, call it $W$, is quasi-affine because $\pi^*\sh{V}$ is a strong tensor generator on $Y_U$ (see \cite[Thm. 6.4]{GrossRes}) and we have the following diagram all of whose squares are cartesian
\[\begin{tikzcd}
  W=\text{Fr}(\pi^*\sh{V}) \arrow[d] \arrow[r] & Y_U \arrow[d] \arrow[r] & Y \arrow[d]  \\
  \left [W/G\right ] \arrow[d] \arrow[r] & \left[Y_U/G\right ] \arrow[r] \arrow[d] & \left [Y/G\right ] \arrow[d]  \\
  \text{Fr}(\sh{V}) \arrow[r] & U \arrow[r] & Y/G=X 
  \end{tikzcd}
\]
Indeed, the three top vertical arrows are $G$-torsor maps since the rightmost one is and this is stable under base change. The bottom three vertical arrows are coarse moduli space maps since the rightmost one is and this is stable under flat base change. Thus we see that $W/G$ can be identified with the frame bundle of $\sh{V}$. Next, observe that $W \to W/G$ is a finite surjective morphism between normal algebraic spaces, since it is proper and quasi-finite. Hence by Corollary \ref{lem:quasiaffine}, the fact that $W$ is quasi-affine implies $W/G$ is. Therefore the frame bundle of $\sh{V}$ is quasi-affine. In particular, $U$ has the resolution property by \cite[Thm. 1.1]{TotaroRes}.

It remains to see why $\sh{F}=\pi_*\sh{G}$ is $S_2$ but this follows because $\sh{G}$ is supported everywhere and $S_2$ (see \cite[Cor. 5.7.11 (ii)]{EGAIV.2}). Lastly, because $\pi$ is dominant, $\text{Supp}(\sh{G})=Y$ implies $\text{Supp}(\sh{F})=X$.

If $X$ lies over $\Spec \mathbb{Q}$, first note that there is a dense open set $U \subset X$ so that $U$ contains the $x_1,...,x_r$ as well as all codimension $\leq 2$ points and where we may write $U=\bigcup_{i=1}^m U_i$ such that each $U_i$ has the resolution property. To see this, fix a $p \in X$ of codimension $\leq 2$ and by Proposition \ref{prop:affineMV} there is a dense open subset $W_p \subset X$ containing $x_1,...,x_r,p$ and which satisfies the hypothesis of Proposition \ref{MVresolution}. Hence each $W_p$ has the resolution property. Thus, we may take $U= \bigcup_{p} W_{p}$ where $p$ runs through the codimension $\leq 2$ points of $X$. By the noetherian hypothesis, there are finitely many $W_p$ which cover $U$.

This means that after replacing $X$ with $U$, we may suppose that $X$ is covered by (finitely many) dense open subsets $U_i$ each of which has the resolution property. For each $1 \leq i \leq m$, let $\sh{F}_i$ denote a coherent extension (to $X$) of a (locally free) strong tensor generator for $U_i$. Indeed, a locally free tensor generator exists on $U_i$ by \cite[Thm. 1.1]{GrossRes} and in characteristic $0$, all tensor generators are strong tensor generators by \cite[Prop. 6.5]{GrossRes}.

Now, if $\sh{I}_i$ denotes the ideal sheaf of the reduced complement $X \smallsetminus U_i$ then we claim that 
\[\sh{G}=\bigoplus_{i=1}^n \sh{I}_i \oplus \sh{F}_i \oplus \sh{F}_i^{\vee}\]
is a strong tensor generator. Indeed, if $\sh{H}$ is an arbitrary coherent sheaf, we know that there is a surjection 
\[f_i: p_i(\sh{F}_i|_{U_i}, \sh{F}_i^{\vee}|_{U_i}) \rightarrow \sh{H}|_{U_i}\] 
for some polynomial $p_i(s,t) \in \mathbb{N}[s,t]$. Using \cite[Prop. 6.9.17]{EGAI.2}, there is a $n_i$ and a map 
\[\sh{I}_i^{n_i}p_i(\sh{F}_i, \sh{F}_i^{\vee}) \rightarrow \sh{H}\]
which extends $f_i$ and is therefore surjective on $U_i$. Doing this for every $1 \leq i \leq m$, we obtain a surjection 
\[\bigoplus_{i=1}^m f_i: \bigoplus_{i=1}^m \sh{I}_i^{n_i}p_i(\sh{F}_i, \sh{F}_i^{\vee}) \rightarrow \sh{H}.\] 

\noindent Moreover one can see that there is a surjection 
\[\bigoplus_{i=1}^m \sh{G}^{\otimes n_i} \otimes p_i(\sh{G}, \sh{G}) \rightarrow \bigoplus_{i=1}^m \sh{I}_i^{n_i}p_i(\sh{F}_i, \sh{F}_i^{\vee}).\]

\noindent Thus every coherent sheaf on $X$ is a quotient of a polynomial in $\sh{G}$ and this implies $\sh{G}$ is a strong tensor generator, as desired. \end{proof}

\begin{Remark} We do not know if Proposition \ref{prop:singlesheaf} holds in the absence of the characteristic $0$ hypothesis. Indeed, for the proof to go through we need to show that non-normal algebraic spaces which satisfy the hypothesis of Corollary \ref{cor:mvsquaresexist} admit strong locally free tensor generators. However, we only know that locally free tensor generators exist on such spaces. \end{Remark}

We now use this to verify the first part of the conjecture of Gross (see \cite[Conj. 2.0.3]{Grossthesis}) when $X$ is reduced. In fact, using the machinery developed thus far, we are even able to establish this in certain non-schematic cases as well. The following result will allow us to remove the reducedness hypothesis in positive characteristic.

\begin{proposition} \label{positivechar} Suppose $X$ is a noetherian algebraic space that lies over $\mathbb{F}_p$. Then $X_{\mathrm{red}}$ has the resolution property if and only if $X$ has the resolution property. \end{proposition}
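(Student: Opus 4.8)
The ``only if'' direction is trivial since $X_{\mathrm{red}} \hookrightarrow X$ is a closed immersion and the resolution property passes to closed subspaces (pull back a resolving locally free sheaf). So the content is in the converse: assuming $X_{\mathrm{red}}$ has the resolution property, deduce it for $X$. The plan is to exploit the Frobenius endomorphism, which is available precisely because we are over $\mathbb{F}_p$. Let $\sh{I} \subset \mathcal{O}_X$ be the nilpotent ideal defining $X_{\mathrm{red}}$; since $X$ is noetherian, there is some $N$ with $\sh{I}^N = 0$. For $q = p^e$ with $q \geq N$, the relative Frobenius (or absolute $e$-th iterated Frobenius) $F^e \colon X \to X$ factors through $X_{\mathrm{red}}$, because on functions it sends $f \mapsto f^q$ and every element of $\sh{I}$ raised to the $q$-th power vanishes. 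Thus we obtain a factorization $X \xrightarrow{\rho} X_{\mathrm{red}} \xrightarrow{\iota} X$ with $\iota \circ \rho = F^e$, where $\iota$ is the closed immersion. The morphism $\rho$ is affine (even finite, as $X$ is noetherian and $F$ is integral) and a universal homeomorphism.

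\textbf{Key steps.} First I would record the factorization $F^e = \iota \circ \rho$ above and note that $\rho \colon X \to X_{\mathrm{red}}$ is an affine morphism of noetherian algebraic spaces. Second, given an arbitrary quasi-coherent sheaf $\sh{G}$ of finite type on $X$, I want to produce a locally free sheaf surjecting onto it. The idea is: push forward to $X_{\mathrm{red}}$, resolve there, then pull back. Concretely, $\rho_* \sh{G}$ is a quasi-coherent sheaf of finite type on $X_{\mathrm{red}}$ (finiteness of $\rho$), so by hypothesis there is a locally free sheaf $\sh{E}$ on $X_{\mathrm{red}}$ and a surjection $\sh{E} \twoheadrightarrow \rho_* \sh{G}$. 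Applying $\rho^*$ and composing with the counit $\rho^* \rho_* \sh{G} \to \sh{G}$ (which is surjective since $\rho$ is affine — the counit of an affine pushforward/pullback adjunction is surjective for, e.g., quasi-coherent sheaves, as it is so affine-locally where it is the evaluation $M \otimes_{A'} A \to M$), we get a surjection $\rho^* \sh{E} \twoheadrightarrow \sh{G}$. Finally, $\rho^* \sh{E}$ is locally free of finite rank on $X$ since $\rho^*$ of a locally free sheaf is locally free. This exhibits $\sh{G}$ as a quotient of a finite locally free sheaf on $X$, proving $X$ has the resolution property.

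\textbf{Main obstacle.} The delicate point is justifying that the counit map $\rho^* \rho_* \sh{G} \to \sh{G}$ is surjective in the algebraic-space setting. For schemes this is immediate since $\rho$ affine means étale-locally (or Zariski-locally) on the target it is $\Spec$ of a ring map $A \to A'$ with $A'$ module-finite over $A$, and the counit is $M \otimes_{A'} A \to M$ wait — more carefully, for $\rho \colon \Spec A' \to \Spec A$ affine, $\rho_* \sh{G} = M$ viewed as an $A$-module (where $\sh{G} = \widetilde{M}$ as $A'$-module), $\rho^* \rho_* \sh{G} = \widetilde{M \otimes_A A'}$, and the counit $M \otimes_A A' \to M$, $m \otimes a' \mapsto a' m$, is visibly surjective. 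For algebraic spaces one reduces to this via an étale cover of $X_{\mathrm{red}}$ by an affine scheme (such a cover exists as $X_{\mathrm{red}}$ is noetherian, hence quasi-compact quasi-separated) and base-changing, using that formation of $\rho_*$ for affine $\rho$ commutes with flat base change and that $\rho^*$ always does; surjectivity can be checked étale-locally. I would also double-check that ``resolution property'' as used here does not secretly require separatedness or other hypotheses not obviously inherited — but since the statement only assumes $X$ noetherian over $\mathbb{F}_p$, and the argument is local in nature once we have the Frobenius factorization, this should go through cleanly. An alternative, perhaps cleaner, route avoiding the counit entirely: resolve $\iota^* \sh{G} = \sh{G}|_{X_{\mathrm{red}}}$ on $X_{\mathrm{red}}$ by $\sh{E} \twoheadrightarrow \sh{G}|_{X_{\mathrm{red}}}$, then use that $\sh{G} \to \iota_* \iota^* \sh{G}$ has nilpotent-ideal-annihilated kernel and proceed by dévissage up the filtration $\sh{I}^j \sh{G}$ — but the Frobenius argument is slicker and I would present that as the main proof.
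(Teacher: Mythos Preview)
Your push--pull argument along $\rho\colon X \to X_{\mathrm{red}}$ is sound in outline and, once one gap is repaired, yields a valid proof that differs in execution from the paper's. The gap is the parenthetical claim that $\rho$ is \emph{finite} (``as $X$ is noetherian and $F$ is integral''): integral plus a noetherian target does not force finiteness---you also need finite type, and the absolute Frobenius fails to be of finite type unless $X$ is $F$-finite. For instance, if $X = \Spec k$ with $[k:k^p] = \infty$ (say $k = \mathbb{F}_p(t_1,t_2,\dots)$), then $F_X$ and hence $\rho$ is not finite, so $\rho_*\sh{G}$ need not be coherent and you cannot directly feed it to the resolution hypothesis on $X_{\mathrm{red}}$. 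The fix is routine: $\rho$ is still affine (integrality already gives that), so the counit $\rho^*\rho_*\sh{G}\to\sh{G}$ remains surjective; write the quasi-coherent sheaf $\rho_*\sh{G}$ as the filtered union of its coherent subsheaves $\sh{H}_\lambda$, observe that some $\rho^*\sh{H}_\lambda\to\sh{G}$ is already surjective because $\sh{G}$ is finitely generated, resolve that $\sh{H}_\lambda$ by a locally free $\sh{E}$ on $X_{\mathrm{red}}$, and pull back.

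The paper starts from the same Frobenius factorization but argues through the Totaro--Gross frame-bundle criterion rather than resolving sheaves one at a time: it chooses a single locally free $\sh{V}$ on $X_{\mathrm{red}}$ whose frame bundle is quasi-affine and shows that $\rho^*\sh{V}$ again has quasi-affine frame bundle on $X$, first reducing modulo nilpotents (a nilpotent thickening of a quasi-affine scheme is quasi-affine) and then noting that the frame bundle of $(F^e_{X_{\mathrm{red}}})^*\sh{V}$ sits affinely over that of $\sh{V}$. Your route is more elementary in that it bypasses the frame-bundle characterization entirely; the paper's route has the structural advantage of producing a single locally free tensor generator on $X$ rather than a separate resolution for each coherent sheaf.
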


\begin{proof} If $X$ has the resolution property, then the pushforward of any coherent sheaf on $X_{\mathrm{red}}$ is the quotient of a locally free sheaf on $X$. By restricting, we see that $X_{\text{red}}$ has the resolution property. 

For the other direction, note that the closed immersion $i: X_{\text{red}} \to X$ is defined by a nilpotent ideal sheaf $\sh{I} \subset \mathcal{O}_X$. Then because there is a $n>0$ with $\sh{I}^{p^n}=0$ we obtain a commutative diagram
$$
\begin{tikzcd} 
						& X\ar[ld, "f"] \ar[d,"F^n"]\\	 
X_{\text{red}} \ar[r]	& X
\end{tikzcd}
$$
where $F^n$ is the $n$-fold iterated (absolute) Frobenius. Moreover, the composition $f \circ i=X_{\text{red}} \to X \to X_{\text{red}}$ is also the $n$-fold iterated (absolute) Frobenius on $X_{\text{red}}$. 

Let $\sh{V}$ denote a locally free sheaf on $X_{\text{red}}$ whose frame bundle is quasi-affine, this exists by our hypotheses and \cite[Thm. 1.1]{GrossRes}. To show $X$ has the resolution property it suffices to show that $f^*\sh{V}$ also has quasi-affine frame bundle. But since $i$ is a nilpotent immersion it suffices to show that the frame bundle of $i^*f^*(\sh{V})=(F_{X_{\text{red}}}^n)^*\sh{V}$ is quasi-affine. Indeed, if $W' \to W$ is a nilpotent immersion of algebraic spaces where $W'$ is quasi-affine then by \cite[Prop. 3.5]{GrossRes} $W$ is a scheme and \cite[Tag 0B7L]{stacks} implies $W$ is also quasi-affine. 

Thus, we are reduced to checking that if the locally free sheaf $\sh{V}$ has a quasi-affine frame bundle, then $(F_{X_{\text{red}}}^n)^*\sh{V}$ does as well. But the induced map on frame bundles $W' \to W$ is finite and since $W$ is quasi-affine, it follows that $W'$ is quasi-affine as well. This shows that $X$ has the resolution property. \end{proof}

\begin{Theorem} \label{resolutionthree} Let $X$ be a noetherian and separated algebraic space which satisfies the conditions in \ref{conditions} and is either reduced or lies over $\mathbb{F}_p$. Let $x_1,...,x_r \in X$ denote a set of points which admits a common affine open neighborhood and suppose one of the following holds:\begin{enumerate}
\item $X$ is a scheme, or
\item $X$ is a normal algebraic space, or
\item $X$ lies over $\mathbb{Q}$.
\end{enumerate}
Then there exists an open subset $U \subset X$ with the resolution property which contains every point of codimension $\leq 2$ in $X$ together with the points $x_1,...,x_r$.
 \end{Theorem} 

\begin{proof} By Proposition \ref{positivechar}, it suffices to assume that $X$ is reduced. By Propositions \ref{prop:singlesheaf} and \ref{prop:strongresolve}, a resolving sheaf $\sh{F}$ exists on $X$ after perhaps removing a closed codimension $\geq 3$ subset which does not contain any of the $x_1,..,x_r$. Then, by Theorem \ref{thm:singlecoherent}, there is an open subset $U \subset X$ which contains $x_1,...,x_r$ and every point of codimension $\leq 2$, a locally free sheaf $\sh{V}$ on $U$, and a surjection
\[\sh{V} \to \sh{F}|_U \to 0,\]
as desired. Since $\sh{F}$ is a resolving sheaf, this shows $U$ satisfies the resolution property, as desired. \end{proof}

To conclude, we verify the second part of Gross' conjecture (see \cite[Conj. 2.0.3]{Grossthesis}) under some mild hypotheses.

\begin{corollary} \label{surjectionthree} Let $X$ denote a noetherian scheme which satisfies the assumptions of \ref{conditions} and at least one of the following conditions:
\begin{enumerate} 
\item $X$ is reduced, or
\item $X$ lies over a field of positive characteristic.
\end{enumerate}
If $\sh{M}$ is a coherent sheaf on $X$, then there is a surjection $\sh{F} \to \sh{M}$ where $\sh{F}$ is coherent and $\sh{F}$ is locally free away from a closed subset of codimension $\geq 3$. 
\end{corollary}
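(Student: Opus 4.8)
The strategy is to take the locally free resolution that Theorem \ref{thm:singlecoherent} (and, in positive characteristic, Corollary \ref{positivechar3} together with Proposition \ref{positivechar}) produces on a large open subset of $X$ and to extend it, coherently, to all of $X$. So first I would choose an open $U \subseteq X$ which contains every point of codimension $\leq 2$, has the resolution property, and whose reduced complement $Z := (X \smallsetminus U)_{\mathrm{red}}$ is an \emph{affine} scheme; such a $U$ is produced by Proposition \ref{prop:affineMV} (in case (1) this gives the resolution property on $U$ via Corollary \ref{MVresolution}, and in case (2) one invokes Corollary \ref{positivechar3}, the extension argument below working verbatim whether or not $X$ is reduced). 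Since $U$ has the resolution property, pick a surjection $q\colon \sh{V}\to\sh{M}|_U$ with $\sh{V}$ locally free on $U$. The problem is then reduced to the following: construct a coherent sheaf $\sh{E}$ on $X$ that is locally free at every codimension $\leq 2$ point, together with a surjection $\sh{E}\to\sh{M}$.

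For the extension step, let $j\colon U\hookrightarrow X$ and let $\sh{I}\subseteq\mathcal{O}_X$ be the ideal sheaf of $Z$. By \cite[Cor. 15.5]{champs} there is a coherent subsheaf $\sh{V}'\subseteq j_*\sh{V}$ with $\sh{V}'|_U=\sh{V}$, and by \cite[Prop. 6.9.17]{EGAI.2} (equivalently \cite[Tag 01PF]{stacks}) the map $q$ extends, for $n\gg 0$, to a map $\tilde q\colon \sh{I}^n\sh{V}'\to\sh{M}$ with $\tilde q|_U=q$. Set $\sh{E}_1:=\sh{I}^n\sh{V}'$. This is coherent, it agrees with $\sh{V}$ on $U$ and hence is locally free at every codimension $\leq 2$ point (all of which lie in $U$, since $\mathrm{codim}(X\smallsetminus U)\geq 3$), and $\tilde q$ is surjective over $U$. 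Therefore the cokernel $\sh{C}:=\mathrm{coker}(\tilde q)$ is a coherent sheaf supported on $Z$; in particular $\sh{C}_x=0$ at every codimension $\leq 2$ point. Since $Z$ is affine, the scheme structure it carries as the scheme-theoretic support of $\sh{C}$ is again affine, so $\sh{C}$ is generated by finitely many global sections and $H^{i}(X,\sh{G})=0$ for $i>0$ and every coherent sheaf $\sh{G}$ supported on $Z$.

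It remains to absorb $\sh{C}$, and this is the heart of the matter. Writing $\sh{M}_1:=\mathrm{im}(\tilde q)=\ker(\sh{M}\twoheadrightarrow\sh{C})$, it is enough to produce a coherent sheaf $\sh{E}_2$, locally free at every codimension $\leq 2$ point, together with a map $\sh{E}_2\to\sh{M}$ whose composite with $\sh{M}\twoheadrightarrow\sh{C}$ is surjective; then $\sh{E}:=\sh{E}_1\oplus\sh{E}_2$ works. Any coherent sheaf supported on $Z$ is automatically locally free of rank zero at codimension $\leq 2$ points, so the only genuine difficulty is to \emph{lift} a surjection onto $\sh{C}$ through the quotient map $\sh{M}\to\sh{C}$ — the obstruction to lifting $\mathcal{O}_X^{\oplus N}\to\sh{C}$ to $\mathcal{O}_X^{\oplus N}\to\sh{M}$ lies in $H^1(X,\sh{M}_1)^{\oplus N}$, which need not vanish. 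I would handle this by induction on $\dim X$: the base case $\dim X\leq 2$ is immediate, since there $U=X$ and Theorem \ref{thm:singlecoherent} already yields the resolution property; for $\dim X\geq 3$ the scheme-theoretic support $Z'$ of $\sh{C}$ has dimension $\leq \dim X-3$, so after reducing to the reduced case (filtering $\sh{C}$ by powers of the ideal of $Z'_{\mathrm{red}}$, or using Proposition \ref{positivechar} in positive characteristic) the inductive hypothesis gives a surjection onto $\sh{C}$ from a coherent sheaf on $Z'$; pushing this forward to $X$ and exploiting the vanishing $H^{>0}(X,-)=0$ for sheaves supported on the affine $Z$ — which forces the global $\mathrm{Ext}^1$-obstruction to be detected pointwise along $Z$ — one arranges, after enlarging the source, that the lift $\sh{E}_2\to\sh{M}$ exists. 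The hard part is precisely this last lifting step, and the affineness of the complement $Z$ guaranteed by Proposition \ref{prop:affineMV} is what makes it accessible; everything else is formal.
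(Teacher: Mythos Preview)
Your strategy has two genuine gaps, and the paper's argument is both simpler and avoids them.

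First, Proposition \ref{prop:affineMV} does not give you an open $U\subset X$ with \emph{affine complement in $X$}. What it produces is an open $U$ that itself fits into a flat Mayer--Vietoris square with affine pieces: concretely $U=\Spec A\cup\Spec B$ with $\Spec A$ dense affine open in $X$ and $\Spec B$ affine open in $D=(X\smallsetminus\Spec A)_{\mathrm{red}}$. The set that is guaranteed to be affine is $(U\smallsetminus\Spec A)_{\mathrm{red}}=\Spec B$, not $Z=(X\smallsetminus U)_{\mathrm{red}}=D\smallsetminus\Spec B$, and there is no reason for the latter to be affine. Since your entire lifting mechanism rests on the vanishing $H^{>0}(X,\sh{G})=0$ for coherent $\sh{G}$ supported on $Z$, this already undermines the plan.

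Second, even if $Z$ were affine, the lifting step is not justified. You are right that for $\sh{E}_2$ supported on $Z$ the local-to-global spectral sequence collapses and the obstruction sits in $H^0\bigl(X,\shExt^1(\sh{E}_2,\sh{M}_1)\bigr)$. But the proposed induction on $\dim X$ applied to $Z'$ only tells you how to resolve $\sh{C}$ as a sheaf on $Z'$; it says nothing about the extension $0\to\sh{M}_1\to\sh{M}\to\sh{C}\to 0$, which lives on $X$. Enlarging the source by sheaves supported on $Z$ does not kill the stalks of $\shExt^1(\sh{E}_2,\sh{M}_1)$, and enlarging instead by something like $\mathcal{O}_X^{\oplus N}$ pushes the obstruction back into $H^1(X,\sh{M}_1)^{\oplus N}$, which you yourself note need not vanish. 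The phrase ``after enlarging the source one arranges that the lift exists'' hides exactly the step that needs an argument.

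The paper's proof bypasses all of this by exploiting a feature of Theorem \ref{resolutionthree} and Corollary \ref{positivechar3} that you do not use: the open $U$ can be chosen to contain any prescribed finite set of points admitting a common affine open neighborhood, in particular any single point $x$ of a scheme $X$. So for each $x\in X$ one takes $U_x\ni x$ containing all codimension $\leq 2$ points and having the resolution property, resolves $\sh{M}|_{U_x}$ by a locally free $\sh{F}_x'$, extends coherently to $\sh{F}_x''$ on $X$, and uses Deligne's formula to get $\sh{F}_x=\sh{I}_x^{n_x}\sh{F}_x''\to\sh{M}$ surjective near $x$. Since $U_x$ contains \emph{all} codimension $\leq 2$ points, $\sh{F}_x$ is locally free there. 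Finitely many $x_1,\ldots,x_m$ suffice by noetherianity, and $\sh{F}=\bigoplus_i\sh{F}_{x_i}\to\sh{M}$ is the desired surjection. No cokernel needs to be absorbed and no lifting problem arises.
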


\begin{proof} For each $x_{\lambda} \in X$, Theorem \ref{resolutionthree} implies there is an open neighborhood $U_{\lambda} \subset X$ with the resolution property and with $\text{codim}(X \smallsetminus U_{\lambda}) \geq 3$. Thus, it follows that there is a surjection $f_{\lambda}: \sh{F}_{\lambda}' \to \sh{M}|_{U_{\lambda}} \to 0$ where $\sh{F}_{\lambda}'$ is a locally free sheaf on $U_{\lambda}$. Let $\sh{F}_{\lambda}''$ denote a coherent extension of $\sh{F}_{\lambda}'$ to all of $X$. Then, applying \cite[Prop. 6.9.17]{EGAI.2} to the inclusion $i: U_{\lambda} \to X$, there is an integer $n_{\lambda}$ and a morphism
\[\sh{I}_{\lambda}^{n_{\lambda}}\sh{F}_{\lambda}'' \to \sh{M}\]
extending $f_{\lambda}$, where $\sh{I}_{\lambda}$ is the ideal sheaf of $Z_{\lambda}=X \smallsetminus U_{\lambda}$. Thus, for each $x_{\lambda} \in X$, there is a coherent sheaf $\sh{F}_{\lambda}=\sh{I}_{\lambda}^{n_{\lambda}}\sh{F}_{\lambda}''$, a morphism $\sh{F}_{\lambda} \to \sh{M}$ which is surjective near $x_{\lambda}$, with $\sh{F}_{\lambda}$ locally free away from a closed subset of codimension $\geq 3$. Since $X$ is noetherian, there are finitely many such sheaves $\sh{F}_{1},...,\sh{F}_{n}$ on $X$ so that 
\[\sh{F}=\bigoplus_{i=1}^n \sh{F}_{i} \to \sh{M} \to 0\]
is surjective. Moreover, $\sh{F}$ is locally free away from a closed subset of codimension $\geq 3$. \end{proof}

\bibliography{mybib}{}

\begin{thebibliography}{10}

\bibitem{MR146182}
M.~Artin.
\newblock Some numerical criteria for contractability of curves on algebraic
  surfaces.
\newblock {\em Amer. J. Math.}, 84:485--496, 1962.

\bibitem{MR3096912}
O.~Benoist.
\newblock Quasi-projectivity of normal varieties.
\newblock {\em Int. Math. Res. Not. IMRN}, (17):3878--3885, 2013.

\bibitem{MR979760}
N.~Bourbaki.
\newblock {\em Commutative algebra. {C}hapters 1--7}.
\newblock Elements of Mathematics (Berlin). Springer-Verlag, Berlin, 1989.
\newblock Translated from the French, Reprint of the 1972 edition.

\bibitem{EHKV}
D.~Edidin, B.~Hassett, A.~Kresch, and A.~Vistoli.
\newblock Brauer groups and quotient stacks.
\newblock {\em Amer. J. Math.}, 123(4):761--777, 2001.

\bibitem{MR2044495}
D.~Ferrand.
\newblock Conducteur, descente et pincement.
\newblock {\em Bull. Soc. Math. France}, 131(4):553--585, 2003.

\bibitem{Grossthesis}
P.~{Gross}.
\newblock Vector bundles as generators on schemes and stacks.
\newblock \url{http://d-nb.info/1007550295/34}, 2010.
\newblock Dissertation.

\bibitem{SurfaceRes}
P.~Gross.
\newblock The resolution property of algebraic surfaces.
\newblock {\em Compos. Math.}, 148(1):209--226, 2012.

\bibitem{GrossRes}
P.~Gross.
\newblock Tensor generators on schemes and stacks.
\newblock {\em Algebr. Geom.}, 4(4):501--522, 2017.

\bibitem{EGAIV.2}
A.~Grothendieck.
\newblock \'el\'ements de g\'eom\'etrie alg\'ebrique : Iv. \'etude locale des
  sch\'emas et des morphismes de sch\'emas, seconde partie.
\newblock {\em Publications Math\'ematiques de l'IH\'ES}, 24:5--231, 1965.

\bibitem{MR217086}
A.~Grothendieck.
\newblock \'{E}l\'{e}ments de g\'{e}om\'{e}trie alg\'{e}brique. {IV}. \'{E}tude
  locale des sch\'{e}mas et des morphismes de sch\'{e}mas. {III}.
\newblock {\em Inst. Hautes \'{E}tudes Sci. Publ. Math.}, (28):255, 1966.

\bibitem{EGAIV4}
A.~Grothendieck.
\newblock \'{E}l\'{e}ments de g\'{e}om\'{e}trie alg\'{e}brique. {IV}. \'{E}tude
  locale des sch\'{e}mas et des morphismes de sch\'{e}mas {IV}.
\newblock {\em Inst. Hautes \'{E}tudes Sci. Publ. Math.}, (32):361, 1967.

\bibitem{EGAI.2}
A.~Grothendieck and J.~A. Dieudonn\'{e}.
\newblock {\em El\'{e}ments de g\'{e}om\'{e}trie alg\'{e}brique. {I}}, volume
  166 of {\em Grundlehren der Mathematischen Wissenschaften [Fundamental
  Principles of Mathematical Sciences]}.
\newblock Springer-Verlag, Berlin, 1971.

\bibitem{hall2014coherent}
J.~Hall and D.~Rydh.
\newblock Coherent tannaka duality and algebraicity of hom-stacks.
\newblock {\em Preprint, May}, 2014.

\bibitem{hall2016mayer}
J.~Hall and D.~Rydh.
\newblock Mayer-vietoris squares in algebraic geometry.
\newblock {\em arXiv preprint arXiv:1606.08517}, 2016.

\bibitem{MR0224620}
R.~Hartshorne.
\newblock {\em Local cohomology}.
\newblock Lecture Notes in Mathematics, No. 41. Springer-Verlag, Berlin-New
  York, 1967.
\newblock A seminar given by A. Grothendieck, Harvard University, Fall, 1961.

\bibitem{MR284437}
G.~Horrocks.
\newblock Birationally ruled surfaces without embeddings in regular schemes.
\newblock {\em Bull. London Math. Soc.}, 3:57--60, 1971.

\bibitem{ConradDual}
B.~Conrad (https://mathoverflow.net/users/3927/bcnrd).
\newblock Smooth linear algebraic groups over the dual numbers.
\newblock MathOverflow.
\newblock URL:https://mathoverflow.net/q/22078 (version: 2012-08-25).

\bibitem{MR2266432}
C.~Huneke and I.~Swanson.
\newblock {\em Integral closure of ideals, rings, and modules}, volume 336 of
  {\em London Mathematical Society Lecture Note Series}.
\newblock Cambridge University Press, Cambridge, 2006.

\bibitem{MR2182076}
M.~Kashiwara and P.~Schapira.
\newblock {\em Categories and sheaves}, volume 332 of {\em Grundlehren der
  Mathematischen Wissenschaften [Fundamental Principles of Mathematical
  Sciences]}.
\newblock Springer-Verlag, Berlin, 2006.

\bibitem{MR2259254}
J.~Koll\'{a}r.
\newblock Non-quasi-projective moduli spaces.
\newblock {\em Ann. of Math. (2)}, 164(3):1077--1096, 2006.

\bibitem{MR3694300}
J.~Koll\'{a}r.
\newblock Coherence of local and global hulls.
\newblock {\em Methods Appl. Anal.}, 24(1):63--70, 2017.

\bibitem{KreschVistoli}
A.~Kresch and A.~Vistoli.
\newblock On coverings of {D}eligne-{M}umford stacks and surjectivity of the
  {B}rauer map.
\newblock {\em Bull. London Math. Soc.}, 36(2):188--192, 2004.

\bibitem{champs}
G.~Laumon and L.~Moret-Bailly.
\newblock {\em Champs alg\'ebriques}, volume~39 of {\em Ergebnisse der
  Mathematik und ihrer Grenzgebiete. 3. Folge. A Series of Modern Surveys in
  Mathematics [Results in Mathematics and Related Areas. 3rd Series. A Series
  of Modern Surveys in Mathematics]}.
\newblock Springer-Verlag, Berlin, 2000.

\bibitem{MR286805}
M.~Martin.
\newblock Sur une caract\'{e}risation des ouverts affines d'un sch\'{e}ma
  affine.
\newblock {\em C. R. Acad. Sci. Paris S\'{e}r. A-B}, 273:A38--A40, 1971.

\bibitem{mathur2020experiments}
S.~Mathur.
\newblock Experiments on the {B}rauer map in high codimension, 2020.

\bibitem{SidRes}
S.~Mathur.
\newblock The resolution property via azumaya algebras.
\newblock {\em Journal f\"ur die reine und angewandte Mathematik (Crelles
  Journal)}, 2021(774):93--126, 2021.

\bibitem{MR0349811}
J.~Milnor.
\newblock {\em Introduction to algebraic {$K$}-theory}.
\newblock Annals of Mathematics Studies, No. 72. Princeton University Press,
  Princeton, N.J.; University of Tokyo Press, Tokyo, 1971.

\bibitem{MR97406}
M.~Nagata.
\newblock Existence theorems for nonprojective complete algebraic varieties.
\newblock {\em Illinois J. Math.}, 2:490--498, 1958.

\bibitem{MR2448277}
S.~Payne.
\newblock Toric vector bundles, branched covers of fans, and the resolution
  property.
\newblock {\em J. Algebraic Geom.}, 18(1):1--36, 2009.

\bibitem{PerlingToric}
M.~Perling and S.~Schr\"{o}er.
\newblock Vector bundles on proper toric 3-folds and certain other schemes.
\newblock {\em Trans. Amer. Math. Soc.}, 369(7):4787--4815, 2017.

\bibitem{MR1726231}
S.~Schr\"{o}er.
\newblock On non-projective normal surfaces.
\newblock {\em Manuscripta Math.}, 100(3):317--321, 1999.

\bibitem{NormalSurfaceRes}
S.~Schr\"oer and G.~Vezzosi.
\newblock Existence of vector bundles and global resolutions for singular
  surfaces.
\newblock {\em Compos. Math.}, 140(3):717--728, 2004.

\bibitem{Seshadri}
Conjeevaram~S Seshadri.
\newblock Geometric reductivity over arbitrary base.
\newblock {\em Adv. in Math.}, 26(3):225--274, 1977.

\bibitem{stacks}
The {Stacks Project Authors}.
\newblock \itshape stacks project.
\newblock \url{http://stacks.math.columbia.edu}, 2017.

\bibitem{ThomasonRes}
R.~W. Thomason.
\newblock Equivariant resolution, linearization, and {H}ilbert's fourteenth
  problem over arbitrary base schemes.
\newblock {\em Adv. in Math.}, 65(1):16--34, 1987.

\bibitem{TotaroRes}
B.~Totaro.
\newblock The resolution property for schemes and stacks.
\newblock {\em J. Reine Angew. Math.}, 577:1--22, 2004.

\end{thebibliography}
\bibliographystyle{plain}

\end{document}